

\documentclass[12pt]{article} 

\bibliographystyle{alphanum}

\usepackage[utf8]{inputenc} 
\usepackage[all]{xy}
\usepackage{epstopdf}


\usepackage{geometry} 
\geometry{a4paper} 

\usepackage{graphicx} 

\setlength{\parindent}{0pt}
\setlength{\parskip}{5pt}

\usepackage{booktabs} 
\usepackage{array} 
\usepackage{paralist} 
\usepackage{verbatim} 
\usepackage{subfig} 


\usepackage{amsmath}
\usepackage{amsthm}
\usepackage{amsfonts}
\usepackage{amsopn}
\usepackage{amssymb}


\newtheorem{thm}{Theorem}[section]
\newtheorem{cor}[thm]{Corollary}
\newtheorem{lem}[thm]{Lemma}
\newtheorem{prop}[thm]{Proposition}

\theoremstyle{remark}
\newtheorem{rem}[thm]{Remark}
\newtheorem{example}[thm]{Example}

\newcommand{\cO}{{\cal O}}
\newcommand{\ra}{\rightarrow}
\newcommand{\CC}{{\mathbb C}}
\newcommand{\FF}{{\mathbb F}}
\newcommand{\NN}{{\mathbb N}}
\newcommand{\PP}{{\mathbb P}}
\newcommand{\ZZ}{{\mathbb Z}}

\DeclareMathOperator{\ord}{ord}

\DeclareMathOperator{\Aut}{Aut}



\title{Faithfulness of Actions on Riemann-Roch Spaces}
\author{Bernhard K\"ock and Joseph Tait}

\begin{document}
\maketitle

\begin{quote}
  {\bf Abstract} Given a faithful action of a finite group $G$ on an algebraic curve~$X$ of genus $g_X\geq 2$, we give explicit criteria for the induced action of~$G$ on the Riemann-Roch space~$H^0(X,\cO_X(D))$ to be faithful, where $D$ is a $G$-invariant divisor on $X$ of degree at least~$2g_X-2$. This leads to a concise answer to the question when the action of~$G$ on the space~$H^0(X, \Omega_X^{\otimes m})$ of global holomorphic polydifferentials of order $m$ is faithful. If $X$ is hyperelliptic, we furthermore provide an explicit basis of~$H^0(X, \Omega_X^{\otimes m})$. Finally, we give applications in deformation theory and in coding theory and we discuss the analogous problem for the action of~$G$ on the first homology $H_1(X, \ZZ/m\ZZ)$ if $X$ is a Riemann surface.
\end{quote}

  \section{Introduction}

  Let $X$ be a connected smooth projective algebraic curve over
an algebraically closed field $k$ equipped with a faithful
action of a finite group $G$ of order $n$. Furthermore, let $D= \sum_{P\in X} n_P [P]$ be a $G$-invariant divisor on~$X$. Then $G$ also acts on the Riemann-Roch space $H^0(X, \cO_X(D))$ consisting of all meromorphic functions on~$X$ whose order at any point $P \in X$ is at least $-n_P$.

A widely studied problem is to determine
the structure of $H^0(X,\cO_X(D))$ as a module over the group
ring $k[G]$. When $D$ is the canonical divisor and $k = \CC$, this amounts to calculating (the character of) the representation of~$G$ on the complex vector space~$H^0(X,\Omega_X)$ of global holomorphic differentials on the Riemann surface~$X$ and goes back to Chevalley-Weil \cite{CW}. If the canonical projection $\pi: X \rightarrow
Y$ from $X$ to the quotient curve $Y = X/G$ is tamely ramified,
fairly general and explicit answers to this problem have been found by Kani \cite{Kani86} and Nakajima \cite{Nak2}. In the case of arbitrary
wild ramification the explicit calculation of the
$k[G]$-isomorphism class of $H^0(X,\cO_X(D))$ is still an open
problem, but many partial and related results are known, see the recent papers \cite{cohogsheaves}, \cite{FWK}, \cite{FGMPW}, \cite{GJK}, \cite{Hor}, \cite{Karan12} and the literature cited there.

In this paper we look at the weaker question of whether the group~$G$ acts faithfully on $H^0(X,\cO_X(D))$. To this end, we first prove formulae for the dimension of the subspace $H^0(X,\cO_X(D))^G$ of $H^0(X,\cO_X(D))$ fixed by $G$, provided the degree of~$D$ is sufficiently large, see Proposition~\ref{dimD} and its corollaries.

In Sections~\ref{GlobalDifferentials} and~\ref{RiemannRochSpaces} we give explicit criteria for the action on $H^0(X, \cO_X(D))$ to be trivial and finally criteria for this action to be faithful if the degree of $D$ is at least $2g_X-2$. The latter criteria become particularly concise when $D$ is a positive multiple of the canonical divisor, see Theorem~\ref{faithful1} and Corollary~\ref{faithful2}, and can be summarized as follows.

Let $p \ge 0$ denote the characteristic of $k$ and let $g_X$ and $g_Y$ denote the genus of~$X$ and~$Y$, respectively. Furthermore, let $m\geq1$ and suppose that $g_X\geq 2$. We recall that a hyperelliptic involution of $X$ is an automorphism $\sigma$ of $X$ of order $2$ such that the quotient curve $X/\langle \sigma \rangle$ is isomorphic to $\mathbb{P}_k^1$. Then $G$ acts faithfully on the space $H^0(X,\Omega_X^{\otimes m})$ of global (poly)differentials of order $m$, unless $G$ contains a hyperelliptic involution and either $m=1$ and $p=2$ or $m=2$ and $g_X=2$.

If $X$ is a Riemann surface, versions of this result can also be found in Lewittes paper~\cite{Lew} or derived from Broughton's paper~\cite{Bro}. Furthermore, it is possible to give different and sometimes shorter proofs of parts of this result using deeper theorems about algebraic curves, see the relevant remarks in Sections~4 and~5.

  In Section~\ref{hyperelliptic} we look at the particular case when $X$ is hyperelliptic and give an explicit basis for the space~$H^0(X,\Omega_X^{\otimes m})$. This will yield a `hands-on' proof of the above result if $G$ is generated by the hyperelliptic involution.

  Faithful actions of permutation groups on Goppa codes play an important role in Coding Theory. In Section~\ref{Goppa} we apply Corollary~\ref{trivialD4} to obtain such actions.

  The dimension formula proved in Section~\ref{DimensionFormulae} moreover allows us to compute the dimension of the tangent space of the equivariant deformation functor associated with~$(G,X)$ provided the group~$G$ satisfies a certain assumption, see Theorem~\ref{deformation}. This theorem generalizes a main result in \cite{quaddiffequi} and considerably simplifies its proof.

  Finally, in Section~\ref{homology}, we investigate a striking analogy between faithful action on $H^0(X,\Omega_X^{\otimes m})$ and faithful action on the first homology~$H_1(X,\ZZ/m\ZZ)$ if $X$ is a Riemann surface.


In this final paragraph of the introduction we explain some notations and fundamental facts that we will use throughout the paper.
We write
  \[
      R=\sum_{P\in X}\delta_P[P]
  \]
for the ramification divisor of $\pi:X \rightarrow Y$.
The Hurwitz formula (see \cite[Ch.~IV, Corollary~2.4]{hart}) states that
  \begin{equation}\label{Hurwitz}
     2g_X-2 = n (2g_Y-2) + \textrm{deg}(R)
  \end{equation}
(where $n= \textrm{ord}(G)$). Furthermore, Hilbert's formula states that
  \begin{equation}\label{Hilbert}
     \delta_P=\sum_{j=0}^{\infty}(\ord(G_j(P))-1),
  \end{equation}
where $G_j(P)$ is the $j^\textrm{th}$ ramification group at $P$ in lower notation, see \cite[Ch.~IV, {\S}~1]{localfields}.
For any $P\in X$, let $e_P=\ord(G_0(P))$ denote the ramification index at $P$.
For any $Q\in Y$ we write $\delta_Q$ for $\delta_P$ and $e_Q$ for $e_P$ where $P\in \pi^{-1}(Q)$; recall that the cardinality of $\pi^{-1}(Q)$ is $\frac{n}{e_Q}$. As usual, the sheaf of differentials on $X$ is denoted by $\Omega_X$ and its $m^\textrm{th}$ tensor power by $\Omega_X^{\otimes m}$ for any $m\geq 2$.
Sections of $\Omega_X^{\otimes m}$ are called {\em polydifferentials of order $m$} and, if $m=2$, {\em quadratic differentials}.
We let $K_Y$ be a canonical divisor on $Y$.
Then the divisor $K_X:=\pi^*(K_Y)+R$ is a $G$-invariant canonical divisor on $X$ by \cite[{\S} IV, Prop. 2.3]{hart} and $\cO_X(mK_X)$ and $\Omega_X^{\otimes m}$ are isomorphic as $G$-sheaves.

  \section{Dimension Formulae}\label{DimensionFormulae}

In this section, given  a $G$-invariant divisor $D$ on our curve $X$ of sufficiently large degree, we are going to compute the dimension of the subspace $H^0(X,\cO_X(D))^G$ of the Riemann-Roch space $H^0(X,\cO_X(D))$ fixed by the action of the group~$G$. When $D$ is a multiple of the canonical divisor $K_X$ on $X$, we will in particular obtain a formula for the dimension of the space $H^0(X,\Omega_X^{\otimes m})^G$ of global $G$-invariant holomorphic polydifferentials of order~$m$.

%

%
%

We first introduce some notations. Let $D=\sum_{P\in X} n_P [P]$ be a $G$-invariant divisor on $X$ (i.e.\ $n_{\sigma(P)}=n_P$ for all $\sigma \in G$ and $P\in X$). For any $Q\in Y$, let $n_Q$ be equal to $n_P$ for any $P\in \pi^{-1}(Q)$. Let $\cO_X(D)$ denote the corresponding equivariant invertible $\cO_X$-module, as usual.
Furthermore let $\pi_*^G(\cO_X(D))$ denote the subsheaf of the direct image $\pi_*(\cO_X(D))$ fixed by the obvious action of $G$ on $\pi_*(\cO_X(D))$ and let $\left\lfloor \frac{\pi_*(D)}{n}
\right \rfloor$ denote the divisor on $Y$ obtained from the push-forward $\pi_*(D)$ by replacing the coefficient $m_Q$ of $Q$ in $\pi_*(D)$ with the integral part $\left \lfloor \frac{m_Q}{n} \right \rfloor$ of $\frac{m_Q}{n}$ for every $Q \in Y$.
The function fields of $X$ and $Y$ are denoted by $K(X)$ and $K(Y)$ respectively.
For any $P \in X$ and $Q \in Y$ let $\ord_P$ and $\ord_Q$ denote the respective valuations of $K(X)$ and $K(Y)$ at $P$ and $Q$.
Finally, let~$\langle a \rangle$ denote the fractional part of any $a\in \mathbb{R}$, i.e.\ $\langle a \rangle = a - \lfloor a \rfloor$.

The next (folklore) lemma is the main idea in the proof of our dimension formulae.

  \begin{lem}\label{folklore}
    Let $D=\sum_{P\in X} n_P [P]$ be a $G$-invariant divisor on $X$.
    Then the sheaves $\pi_*^G(\cO_X(D))$ and $\cO_Y\left(\left\lfloor \frac{\pi_*(D)}{n}\right \rfloor\right)$ are equal as subsheaves of the constant sheaf $K(Y)$ on $Y$.
    In particular the sheaf $\pi_*^G(\cO_X(D))$ is an invertible $\cO_Y$-module.
  \end{lem}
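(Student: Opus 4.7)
The plan is to check the equality sheaf-theoretically on stalks (or equivalently on sections over affine opens of $Y$), regarding both sides as subsheaves of the constant sheaf $K(Y)$. Since $\pi$ is Galois with group $G$, one has $K(X)^G = K(Y)$, so every $G$-invariant section of $\cO_X(D)$ over $\pi^{-1}(V)$ for an open $V \subseteq Y$ lies in $K(Y)$; in particular $\pi_*^G(\cO_X(D))(V)$ is a subset of $K(Y)$, and it makes sense to compare it with $\cO_Y(\lfloor \pi_*(D)/n \rfloor)(V)$.

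Next I would unwind the defining condition on both sides in terms of valuations. A function $f \in K(Y)$ lies in $H^0(\pi^{-1}(V), \cO_X(D))$ precisely when $\ord_P(f) \ge -n_P$ for every $P \in \pi^{-1}(V)$. The key local computation is that for $f \in K(Y)$ and $P \in \pi^{-1}(Q)$, the valuation transforms as $\ord_P(f) = e_P \cdot \ord_Q(f) = e_Q \cdot \ord_Q(f)$. Combined with $G$-invariance of $D$ (which gives $n_P = n_Q$ for $P \in \pi^{-1}(Q)$) and the fact that $G$ acts transitively on $\pi^{-1}(Q)$, the condition collapses to a condition indexed by $Q \in V$, namely $e_Q \cdot \ord_Q(f) \ge -n_Q$, equivalently $\ord_Q(f) \ge -\lfloor n_Q/e_Q \rfloor$ (using that $\ord_Q(f) \in \ZZ$).

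It remains to match this with the right-hand side. By definition of push-forward of divisors under a finite morphism of smooth curves over an algebraically closed field, the coefficient $m_Q$ of $Q$ in $\pi_*(D)$ equals $\sum_{P \in \pi^{-1}(Q)} n_P = \frac{n}{e_Q} \cdot n_Q$, so $\lfloor m_Q/n \rfloor = \lfloor n_Q/e_Q \rfloor$. Thus the condition derived in the previous paragraph is exactly the condition that $f$ be a section of $\cO_Y(\lfloor \pi_*(D)/n \rfloor)$ over $V$. This gives the sheaf equality, and since $\cO_Y(\lfloor \pi_*(D)/n \rfloor)$ is by construction an invertible $\cO_Y$-module, so is $\pi_*^G(\cO_X(D))$.

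The only mildly subtle point is keeping the ramification-index bookkeeping consistent across the three incarnations ($P$-level, $Q$-level, and divisor-theoretic), and in particular justifying the key identity $\ord_P(f) = e_P \ord_Q(f)$ for $f$ pulled back from $Y$; but this is just the definition of ramification index, so no genuine obstacle arises. The argument is essentially a direct unpacking of definitions once one realises that $G$-invariant meromorphic functions on $X$ are meromorphic functions on $Y$.
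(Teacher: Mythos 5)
Your proposal is correct and follows essentially the same route as the paper's proof: reduce to the observation that $K(X)^G=K(Y)$ so both sheaves live inside the constant sheaf $K(Y)$, then compare locally via $\ord_P(f)=e_P\ord_Q(f)$ and the integrality of $\ord_Q(f)$ to introduce the floor. The only cosmetic difference is that you verify the equality on sections over opens (and spell out the coefficient $\frac{n}{e_Q}n_Q$ of $\pi_*(D)$ explicitly), whereas the paper checks stalks; these are equivalent.
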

  \begin{proof}
    For every open subset $V$ of $Y$ we have
      \[
	 \pi_*^G(\cO_X(D))(V) = \cO_X(D) (\pi^{-1}(V))^G \subseteq K(X)^G = K(Y).
      \]
    In particular both sheaves are subsheaves of the constant sheaf $K(Y)$ as stated.
    It therefore suffices to check that their stalks are equal.
    For any $Q \in Y$ and $P \in \pi^{-1}(Q)$  we have
      \begin{eqnarray*}
	 \lefteqn{\pi_*^G\left(\cO_X(D)\right)_Q = \cO_X(D)_P \cap K(Y)}\\
	  &=& \left\{f \in K(Y): \textrm{ord}_P(f) \ge -n_P\right\}\\
	  &=& \left\{f \in K(Y): \textrm{ord}_Q(f) \ge - \frac{n_P}{e_P}\right\}\\
	  &=& \left\{ f \in K(Y): \textrm{ord}_Q(f) \ge - \left\lfloor\frac{n_P}{e_P} \right\rfloor \right\}\\
	  &=& \cO_Y\left(\left\lfloor \frac{\pi_*(D)}{n} \right\rfloor\right)_Q,
      \end{eqnarray*}
    as desired.
  \end{proof}

  The following proposition computes the dimension of the subspace~$H^0(X,\cO_X(D))^G $ of $H^0(X,\cO_X(D))$ fixed by~$G$.

  \begin{prop}\label{dimD}
  Let $D=\sum_{P \in X} n_P [P]$ be a $G$-invariant divisor on $X$ such that
  \[\deg(D) > 2g_X - 2 - \sum_{P \in X} \sum_{j \ge 1} \left(\ord(G_j(P)) - 1\right).\]
  Then we have:
  \[\dim_k H^0(X,\cO_X(D))^G = 1- g_Y + \frac{1}{n} \deg(D) - \sum_{Q \in Y} \left\langle \frac{n_Q}{e_Q} \right\rangle.\]
  \end{prop}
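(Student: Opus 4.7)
My plan is to reduce the computation to a Riemann-Roch calculation on the quotient curve $Y$ by means of Lemma~\ref{folklore}. The key starting point is that taking $G$-invariants commutes with global sections, so that
\[
H^0(X,\cO_X(D))^G = \Gamma(Y,\pi_*\cO_X(D))^G = \Gamma(Y,\pi_*^G\cO_X(D)).
\]
Lemma~\ref{folklore} then identifies the sheaf on the right with $\cO_Y(D')$, where $D' := \left\lfloor \pi_*(D)/n \right\rfloor$, reducing the problem to computing $\dim_k H^0(Y,\cO_Y(D'))$ by Riemann-Roch on $Y$.

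Next I would determine $\deg(D')$. Since $D$ is $G$-invariant, each fibre $\pi^{-1}(Q)$ consists of $n/e_Q$ points each with coefficient $n_Q$, so the coefficient of $Q$ in $\pi_*(D)$ is $(n/e_Q)\,n_Q$ and the coefficient of $Q$ in $D'$ equals $\lfloor n_Q/e_Q\rfloor = n_Q/e_Q - \langle n_Q/e_Q\rangle$. Summing over $Q$ and using $\deg(\pi_*(D)) = \deg(D)$ gives
\[
\deg(D') = \frac{\deg(D)}{n} - \sum_{Q\in Y}\left\langle\frac{n_Q}{e_Q}\right\rangle,
\]
which is precisely the numerical quantity in the statement. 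Riemann-Roch on $Y$ now yields the claimed formula, provided $H^1(Y,\cO_Y(D'))=0$, i.e.\ provided $\deg(D') > 2g_Y-2$.

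The main obstacle is establishing this last inequality from the hypothesis. Using Hilbert's formula~\eqref{Hilbert} to rewrite $\sum_{P\in X}\sum_{j\ge 1}(\ord(G_j(P))-1) = \deg(R) - \sum_{P\in X}(e_P-1)$, the hypothesis becomes
\[
\deg(D) > 2g_X - 2 - \deg(R) + \sum_{P\in X}(e_P-1).
\]
Substituting the Hurwitz formula~\eqref{Hurwitz} together with the identity $\sum_{P\in X}(e_P-1) = n\sum_{Q\in Y}(1 - 1/e_Q)$ (obtained by collecting the $n/e_Q$ terms in each fibre) simplifies this to
\[
\frac{\deg(D)}{n} > 2g_Y - 2 + \sum_{Q\in Y}\left(1 - \frac{1}{e_Q}\right).
\]
Since $\langle n_Q/e_Q\rangle \le 1 - 1/e_Q$ for every $Q$, subtracting the right-hand sum from both sides of this inequality yields $\deg(D') > 2g_Y - 2$, which is what Riemann-Roch needs. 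The result then follows.
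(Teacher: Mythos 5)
Your proposal is correct and follows essentially the same route as the paper: identify $H^0(X,\cO_X(D))^G$ with $H^0\bigl(Y,\cO_Y\bigl(\lfloor\pi_*(D)/n\rfloor\bigr)\bigr)$ via Lemma~\ref{folklore}, compute the degree of that divisor, and verify $\deg\lfloor\pi_*(D)/n\rfloor>2g_Y-2$ using the bound $\langle n_Q/e_Q\rangle\le 1-1/e_Q$ together with Hilbert's and Hurwitz' formulas before applying Riemann--Roch on $Y$. The only cosmetic difference is that you rewrite the hypothesis and subtract, whereas the paper bounds the degree from below in one chain of inequalities.
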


  \begin{rem}
  Note that the double sum $\sum_{P \in X} \sum_{j \ge 1}\left(\ord(G_j(P))-1\right)$ is non-ne\-ga\-tive and it is zero if and only if $\pi$ is at most tamely ramified. Subtracting this double sum makes the the usual bound $2g_X-2$ smaller and hence the statement stronger, see also the proof of the next corollary.
  \end{rem}

  \begin{proof}
  We have
  \begin{eqnarray*}
  \lefteqn{\deg \left\lfloor \frac{\pi_*(D)}{n}\right\rfloor = \sum_{Q \in Y} \left\lfloor \frac{n}{e_Q} \frac{n_Q}{n} \right\rfloor
  = \sum_{Q \in Y} \left\lfloor \frac{n_Q}{e_Q} \right\rfloor}\\
  & = & \sum_{Q \in Y} \left( \frac{n_Q}{e_Q} - \left\langle \frac{n_Q}{e_Q} \right\rangle \right) \\
  & \ge & \sum_{Q \in Y} \left(\frac{n_Q}{e_Q} - \frac{e_Q - 1}{e_Q} \right)\\
  &= & \sum_{P \in X} \left( \frac{n_P}{n} - \frac{e_P - 1}{n} \right) \\
  & = & \frac{1}{n}\left(\deg(D) - \sum_{P \in X} (e_P -1)\right)\\
  & > & \frac{1}{n} \left( 2g_X-2 - \sum_{P \in X} \sum_{j \ge 1} \left(\ord(G_j(P))-1 \right) - \sum_{P \in X} (e_P -1)\right)\\
  && \hspace*{5.9cm} \textrm{(by assumption)}\\
  & = & \frac{1}{n} \left( 2g_X -2 - \deg(R) \right) \hspace*{1.9cm} \textrm{(by Hilbert's formula~(\ref{Hilbert}))} \\
  & = & 2g_Y-2 \hspace*{4.5cm} \textrm{(by Hurwitz' formula~(\ref{Hurwitz}))}.
  \end{eqnarray*}
  Hence, using Lemma~\ref{folklore} and the Riemann-Roch formula \cite[Ch.~IV, {\S}1, Theorem~1.3 and Example~1.3.4]{hart}, we obtain
  \begin{eqnarray*}
  \lefteqn{\dim_k H^0(X,\cO_X(D))^G = \dim_k H^0\left(Y, \pi_*^G(\cO_X(D))\right)}\\
  & = & \dim_k H^0\left(Y,\cO_Y\left(\left\lfloor \frac{\pi_*(D)}{n}\right\rfloor \right)\right)\\
  & = & 1-g_Y + \deg \left\lfloor \frac{\pi_*(D)}{n}\right\rfloor \\
  &=& 1-g_Y + \sum_{Q \in Y} \left\lfloor \frac{n_Q}{e_Q} \right \rfloor \\
  &=& 1- g_Y + \frac{1}{n} \deg(D) - \sum_{Q \in Y} \left\langle \frac{n_Q}{e_Q}\right\rangle,
  \end{eqnarray*}
  as stated.
  \end{proof}


The following corollary computes the dimension of $H^0(X,\Omega_X^{\otimes m})^G$ if $g_X \ge 2$. (If $g_X =0$ or $g_X =1$, see Example~\ref{example3}.)
In particular we see that this dimension is completely determined by $m$, $g_Y$ and $\deg \left\lfloor \frac{m\pi_*(R)}{n} \right\rfloor$.

  \begin{cor}\label{dim}
    Let $m\geq 1$ and suppose that $g_X \ge 2$. Then we have:
    \[\dim_k H^0(X,\Omega_X^{\otimes m})^G =
		   \begin{cases}
	    g_Y \hspace*{2cm} \mbox{if } m=1 \mbox{ and } \pi \mbox{ is tamely ramified}, \\
\\
	    (2m-1)(g_Y-1) + \deg\left\lfloor\frac{m\pi_*(R)}{n} \right\rfloor \hspace*{1cm} \mbox{otherwise}.
	  \end{cases}\]
   \end{cor}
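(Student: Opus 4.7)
The plan is to apply Proposition~\ref{dimD} to the $G$-invariant canonical divisor $D=mK_X=m\pi^*(K_Y)+mR$, handle the exceptional tame $m=1$ case separately, and then translate the fractional-part sum into $\deg\lfloor m\pi_*(R)/n\rfloor$.

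First I check when the hypothesis of Proposition~\ref{dimD} holds for $D=mK_X$, i.e.\ when $m(2g_X-2)>2g_X-2-\sum_{P,j\ge 1}(\ord(G_j(P))-1)$. Since $g_X\ge 2$, we have $2g_X-2\ge 2>0$, so for $m\ge 2$ the strict inequality holds regardless of the (non-negative) correction sum; for $m=1$ it holds precisely when the correction sum is strictly positive, that is, when $\pi$ is wildly ramified. This is exactly the ``otherwise'' case in the statement. The ``tame, $m=1$'' case therefore falls outside Proposition~\ref{dimD} and needs its own argument.

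For the ``otherwise'' case, I write $K_Y=\sum_Q a_Q[Q]$, so that the coefficient of $P$ in $mK_X$ equals $n_P=m(a_{\pi(P)}e_P+\delta_P)$, giving $n_Q/e_Q=ma_Q+m\delta_Q/e_Q$ and hence
\[
\langle n_Q/e_Q\rangle = \langle m\delta_Q/e_Q\rangle.
\]
Since the coefficient of $Q$ in $\pi_*(R)/n$ is $\delta_Q/e_Q$, summing $m\delta_Q/e_Q=\lfloor m\delta_Q/e_Q\rfloor+\langle m\delta_Q/e_Q\rangle$ over $Q$ gives
\[
\sum_{Q\in Y}\langle m\delta_Q/e_Q\rangle=\frac{m\,\deg(R)}{n}-\deg\!\left\lfloor\frac{m\pi_*(R)}{n}\right\rfloor,
\]
where I use $\sum_Q (n/e_Q)\delta_Q=\deg(R)$. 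Substituting into Proposition~\ref{dimD} with $\deg(D)=m(2g_X-2)$ and invoking Hurwitz' formula~(\ref{Hurwitz}) to rewrite $(2g_X-2-\deg R)/n$ as $2g_Y-2$, the telescoping yields
\[
1-g_Y+m(2g_Y-2)+\deg\!\left\lfloor\frac{m\pi_*(R)}{n}\right\rfloor=(2m-1)(g_Y-1)+\deg\!\left\lfloor\frac{m\pi_*(R)}{n}\right\rfloor,
\]
as claimed.

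For the tame case with $m=1$, I bypass Proposition~\ref{dimD} and apply Lemma~\ref{folklore} directly to $D=K_X$. In this case $\delta_Q=e_Q-1$, so $n_Q/e_Q=a_Q+1-1/e_Q$ has integer part $a_Q$ whether $e_Q=1$ or $e_Q>1$; hence $\lfloor\pi_*(K_X)/n\rfloor=K_Y$. Lemma~\ref{folklore} then identifies $\pi_*^G\cO_X(K_X)$ with $\Omega_Y$, and taking global sections gives $\dim_k H^0(X,\Omega_X)^G=\dim_k H^0(Y,\Omega_Y)=g_Y$. The main (minor) obstacle is exactly this endpoint case, since the generic formula $(2m-1)(g_Y-1)+\deg\lfloor m\pi_*(R)/n\rfloor$ specialises in tame $m=1$ to $g_Y-1$, off by one from the correct value $g_Y$; the separate treatment via Lemma~\ref{folklore} accounts for this discrepancy cleanly.
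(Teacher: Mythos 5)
Your proposal is correct and follows essentially the same route as the paper: the ``otherwise'' case is handled by verifying the hypothesis of Proposition~\ref{dimD} for $D=mK_X$ (automatic for $m\ge 2$ since $g_X\ge 2$, and for $m=1$ precisely when $\pi$ is wildly ramified) and then converting the fractional-part sum into $\deg\lfloor m\pi_*(R)/n\rfloor$ via Hurwitz, while the tame $m=1$ case is treated separately by showing $\lfloor\pi_*(K_X)/n\rfloor=K_Y$ and invoking Lemma~\ref{folklore}. Your explicit remark that the generic formula would give $g_Y-1$ in the tame $m=1$ case is a correct and useful sanity check, but the argument itself is the paper's.
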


   \begin{proof}
   If $\pi$ is tamely ramified, then $\delta_P = e_P-1$ for all $P \in X$ and the divisor~$\left\lfloor \frac{\pi_*(R)}{n}\right\rfloor$ is the zero divisor. We therefore have
   \[\left\lfloor \frac{\pi_*(K_X)}{n} \right\rfloor = \left\lfloor \frac{\pi_*(\pi^*(K_Y)) + \pi_*(R)}{n} \right\rfloor =
   \left\lfloor \frac{nK_Y + \pi_*(R)}{n} \right\rfloor = K_Y\]
   and, using Lemma~\ref{folklore}, we obtain
   \[\dim_k H^0(X,\Omega_X)^G = \dim_k H^0\left(Y, \pi_*^G(\cO_X(K_X))\right) = \dim_k H^0(Y, \cO_Y(K_Y)) = g_Y,\]
   as stated. \\
   If $\pi$ is not tamely ramified, then the double sum $\sum_{P \in X} \sum_{j \ge 1} \left(\ord(G_j(P))-1\right)$ is positive. On the other hand, if $m \ge 2$, then we have $m(2g_X-2) > 2g_X-2$ since we have assumed that $g_X \ge 2$. So, in either case we have
   \[\deg(mK_X) = m(2g_X-2) > 2g_X-2-\sum_{P\in X} \sum_{j \ge 1} \left(\ord(G_j(P))-1\right).\]
   We temporarily write $\sum_{P\in X}n_P[P]$ for $K_X$ and, as above, for any $Q \in Y$ and $P \in \pi^{-1}(Q)$, we write $n_Q$ for $n_P$.  Using the previous proposition and Hurwitz formula~(\ref{Hurwitz}) we then obtain
   \begin{eqnarray*}
   \lefteqn{\dim_k H^0(X, \Omega_X^{\otimes m})^G = \dim_k H^0(X, \cO_X(m K_X))^G}\\
   &=& 1-g_Y + \frac{1}{n} (m(2g_X-2)) - \sum_{Q \in Y} \left\langle \frac{m n_Q}{e_Q} \right\rangle\\
   &=& 1-g_Y + m(2g_Y -2) + \frac{m}{n} \deg(R) - \sum_{Q \in Y} \left\langle \frac{m n_Q}{e_Q} \right\rangle \\
   &=& (2m-1)(g_Y-1) + \deg \left\lfloor \frac{m \pi_*(R)}{n} \right\rfloor
   \end{eqnarray*}
   because $\frac{m \pi_*(K_X)}{n} = \frac{m \pi_*(\pi*(K_Y)) + m \pi_*(R)}{n} = m K_Y + \frac{m \pi_*(R)}{n}$ and $\deg(R) = \deg(\pi_*(R))$. This finishes the proof of Corollary~\ref{dim}.
   \end{proof}

If $m=1$ we reformulate Corollary~\ref{dim} in the following slightly more concrete way.
Let $S$ denote the set of all points $Q\in Y$ such that $\pi$ is not tamely ramified above~$Q$, and let $s$ denote the cardinality of $S$.
Note that $s=0$ if $p$ does not divide~$n$.

\begin{cor}\label{dim2}
  We have
    \begin{eqnarray*}
      \dim_k H^0(X,\Omega_X)^G =
	\begin{cases}
	  g_Y & \mbox{if } s=0, \\
	  g_Y-1+\sum_{Q\in S}\left\lfloor \frac{\delta_Q}{e_Q} \right\rfloor & \mbox{otherwise}.
	\end{cases}
    \end{eqnarray*}
\end{cor}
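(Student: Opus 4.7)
The plan is to deduce this directly from Corollary~\ref{dim} applied with $m=1$, by unpacking the term $\deg\left\lfloor \frac{\pi_*(R)}{n}\right\rfloor$.

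The first case ($s=0$) is immediate: $s=0$ means $\pi$ is tamely ramified, so the first branch of Corollary~\ref{dim} gives $\dim_k H^0(X,\Omega_X)^G = g_Y$.

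For the second case ($s \geq 1$), the second branch of Corollary~\ref{dim} yields
\[
\dim_k H^0(X,\Omega_X)^G = (g_Y - 1) + \deg \left\lfloor \frac{\pi_*(R)}{n} \right\rfloor,
\]
so it suffices to identify $\deg\left\lfloor \pi_*(R)/n \right\rfloor$ with $\sum_{Q\in S}\lfloor \delta_Q/e_Q\rfloor$. Since every $Q\in Y$ has $n/e_Q$ preimages, each carrying coefficient $\delta_Q$ in $R$, pushing forward gives $\pi_*(R) = \sum_{Q\in Y}\frac{n\delta_Q}{e_Q}[Q]$, so the coefficient of $Q$ in $\left\lfloor \pi_*(R)/n\right\rfloor$ is exactly $\lfloor \delta_Q/e_Q\rfloor$.

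The only thing left to observe is that $Q\notin S$ contributes zero to this sum: if $\pi$ is tamely ramified above $Q$ then Hilbert's formula~(\ref{Hilbert}) collapses to $\delta_Q = e_Q - 1 < e_Q$, hence $\lfloor \delta_Q/e_Q\rfloor = 0$. Summing only over $Q\in S$ then gives the claimed formula. There is no genuine obstacle here — this is a bookkeeping translation of the general formula into the special case $m=1$; the only mild point to verify is that tame ramification above $Q$ forces $\lfloor \delta_Q/e_Q\rfloor = 0$, which is what justifies restricting the sum to $S$.
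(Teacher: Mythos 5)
Your proposal is correct and follows essentially the same route as the paper: apply Corollary~\ref{dim} with $m=1$, compute $\deg\left\lfloor \frac{\pi_*(R)}{n}\right\rfloor = \sum_{Q\in Y}\left\lfloor \frac{\delta_Q}{e_Q}\right\rfloor$ from the push-forward, and observe that the terms with $Q\notin S$ vanish since tame ramification above $Q$ gives $\delta_Q = e_Q-1 < e_Q$. No gaps.
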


\begin{proof}
  We have
    \[
	\deg\left\lfloor\frac{\pi_*(R)}{n} \right\rfloor = \sum_{Q\in Y}\left\lfloor\sum_{P\mapsto Q} \frac{\delta_P}{n} \right\rfloor = \sum_{Q\in Y} \left\lfloor \frac{\delta_Q}{e_Q} \right\rfloor.
    \]
Furthermore we have $\left\lfloor \frac{\delta_Q}{e_Q} \right\rfloor = 0$ if and only if $\delta_Q<e_Q$, i.e.\ if and only $Q\notin S$.
Thus Corollary \ref{dim2} follows from Corollary~\ref{dim}.
\end{proof}

\begin{rem}
  If $p>0$ and $G$ is cyclic, then Corollary \ref{dim2} can be derived from Proposition~6 in the recent pre-print
 \cite{kako} by Karanikolopoulos and Kontogeorgis.
\end{rem}

  \section{Faithfulness of Actions on the Space of Global \\Holomorphic Differentials}\label{GlobalDifferentials}

  In this section we consider the space $H^0(X,\Omega_X)$ of global holomorphic differentials on $X$ and prove that the action of the group~$G$ on this space is faithful if and only if $G$ does not contain a hyperelliptic involution or if $p\not=2$, see Theorem~\ref{faithful1}. The proof is based on the following criterion for the action of~$G$ on $H^0(X,\Omega_X)$ to be trivial.

  \begin{prop}\label{m=1}
    We assume that $p  > 0$, that $G$ is cyclic of order $p$, that $g_X \ge 2$ and that $g_Y=0$. Then $G$ acts trivially on $H^0(X,\Omega_X)$ if and only if $p=2$.
  \end{prop}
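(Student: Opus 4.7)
The plan is to reduce the question to a comparison of two explicit numerical formulae. On the one hand, Corollary~\ref{dim2} computes $\dim_k H^0(X,\Omega_X)^G$; on the other, Hurwitz' formula~(\ref{Hurwitz}) computes $g_X$. The action of $G$ on $H^0(X,\Omega_X)$ is trivial if and only if these two numbers coincide.

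First I would identify the local data. Since $|G|=p=\cha(k)$, every ramified point is totally and wildly ramified with $e_Q=p$, and $g_X\ge 2$ together with $g_Y=0$ forces $S\ne\emptyset$. For cyclic $G$ of order $p$ the ramification filtration above any $Q\in S$ has a single break $i_Q\ge 1$ with $\gcd(i_Q,p)=1$, so Hilbert's formula~(\ref{Hilbert}) gives $\delta_Q=(i_Q+1)(p-1)$. Substituting into Hurwitz' formula yields
\[ g_X = -p+1+\tfrac{1}{2}\sum_{Q\in S}\delta_Q, \]
while Corollary~\ref{dim2} gives $\dim_k H^0(X,\Omega_X)^G = -1+\sum_{Q\in S}\lfloor \delta_Q/p \rfloor$. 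Equating these and rearranging, triviality of the action becomes the condition
\[ (\ast)\qquad \sum_{Q\in S}\left(\tfrac{\delta_Q}{2}-\left\lfloor\tfrac{\delta_Q}{p}\right\rfloor\right) = p-2. \]

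I would then split into two cases. If $p=2$, the right-hand side of $(\ast)$ is zero, and coprimality $\gcd(i_Q,2)=1$ forces each $i_Q$ odd, hence each $\delta_Q=i_Q+1$ even, so every summand on the left vanishes and $(\ast)$ holds. If $p\ge 3$, I would bound each summand from below by $\delta_Q(p-2)/(2p)$ (since $\lfloor\delta_Q/p\rfloor\le\delta_Q/p$) and use $\sum_{Q\in S}\delta_Q=2g_X-2+2p$ from Hurwitz to conclude that the left-hand side of $(\ast)$ is at least $(p-2)(g_X+p-1)/p$. If $(\ast)$ held, dividing by $p-2>0$ would force $g_X\le 1$, contradicting $g_X\ge 2$; thus the action is nontrivial.

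The only real obstacle is extracting the correct local data for a wildly ramified cyclic $p$-extension, namely the existence of a unique ramification break and the coprimality $\gcd(i_Q,p)=1$, since these are what pin down the parity of $\delta_Q$ in characteristic $2$ and drive the inequality in characteristic $\ge 3$. Once this local input is in place, the rest is a direct comparison of two closed-form expressions.
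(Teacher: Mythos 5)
Your proof is correct, and its skeleton is the same as the paper's: both arguments compare $g_X$ (computed from Hurwitz' formula~(\ref{Hurwitz})) with $\dim_k H^0(X,\Omega_X)^G$ (computed from Corollary~\ref{dim2}), using the same local input that each ramified point is totally wildly ramified with a single ramification break prime to $p$, so that $\delta_Q=(i_Q+1)(p-1)$; the $p=2$ direction is then handled identically (evenness of each $\delta_Q$ makes the two dimensions coincide). Where you genuinely diverge is the case $p\ge 3$: the paper writes each break as $N_i=s_ip+t_i$, turns the equality of dimensions into an exact Diophantine identity, and rules out its solutions by a sign analysis that pins down the exceptional configurations ($r=1$ with $g_X=0$, or $p=3$ with $g_X=1$). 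You instead apply the one-line estimate $\lfloor\delta_Q/p\rfloor\le\delta_Q/p$ to bound each term of your condition $(\ast)$ from below by $\delta_Q(p-2)/(2p)$, and Hurwitz then forces $g_X\le 1$ directly. This is shorter and cleaner, and it shows that the coprimality of the break to $p$ is only actually needed when $p=2$; what the paper's longer computation buys in exchange is an explicit identification of the low-genus curves on which the action would be trivial, which is consistent with (and illuminates) the necessity of the hypothesis $g_X\ge 2$.
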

  \begin{proof}
  Let $P_1,\ldots ,P_r \in X$ denote the ramification points of $\pi$. We write $e_i$ and $\delta_i$ for $e_{P_i}$ and $\delta_P{_i}$.
  Also, for $i=1, \ldots, r$, we define $N_i \in \NN$ by $\textrm{ord}_{P_i}(\sigma(t) - t) = N_i +1$ where $t$ is a local parameter at the ramification point $P_i$ and $\sigma$ is a generator of the decomposition group $G_0(P_i)$. From Lemma~1 on p.~87 in \cite{Naka} we know that $p$ does not divide $N_i$ for $i=1, \ldots, r$, a fact we will use several times below. We have $\delta_i =(N_i+1)(p-1)$ by Hilbert's formula~(\ref{Hilbert}). Let $N:= \sum_{i=1}^r N_i$. Using the Hurwitz formula~(\ref{Hurwitz}) we then obtain
%
      \begin{equation}\label{hur2}
	2g_X - 2 = -2p + (N+r)(p-1)
      \end{equation}
    and hence
      \[
	\textrm{dim}_k H^0(X,\Omega_X) = g_X =\frac{(N+r-2)(p-1)}{2}.
      \]
    Since $g_X \ge 0$ we obtain $r \ge 1$; that is, $\pi$ is not unramified.
    As $\textrm{char}(k)=p=\textrm{ord}(G)$, the morphism $\pi$ is thus not tamely ramified and the cardinality~$s$ defined at
    the end of the previous section is not zero. From Corollary~\ref{dim2} we conclude that
    \[\textrm{dim}_k H^0(X, \Omega_X)^G = g_Y-1+\sum_{i=1}^r \left\lfloor \frac{\delta_i}{e_i} \right\rfloor = -1 +N +r +\sum_{i=1}^r \left\lfloor
    - \frac{N_i+1}{p}\right\rfloor.\]

    If $p=2$, the dimensions of $H^0(X,\Omega_X)$ and $H^0(X,\Omega_X)^G$ are therefore equal (to $\frac{N+r-2}{2}$).
    This shows the `if' direction in Proposition~\ref{m=1}.

    To prove the other direction we now assume that $G$ acts trivially $H^0(X, \Omega_X)$ and we suppose that $p \ge 3$. We will show that this contradicts our assumption that $g_X \ge 2$.
    For each $i=1, \ldots, r$, we write $N_i = s_i p +t_i$ with $s_i \in \NN$ and $t_i \in \{1, \ldots, p-1\}$.
    We furthermore put $S:=\sum_{i=1}^r s_i$ and $T:= \sum_{i=1}^r t_i \ge r$.
    Then we have
      \[
	 \frac{(N+r-2)(p-1)}{2} =\textrm{dim}_k H^0(X,\Omega_X)  = \textrm{dim}_k H^0(X,\Omega_X)^G = N-S-1 .
      \]
    Rearranging this equation we obtain
      \[
	 (3-p)N - 2S = (r-2)(p-1) +2
      \]
    and hence
      \[
	 (-p^2 + 3p -2)S = (r-2)(p-1) +2 - (3-p)T.
      \]
    Since $-p^2+3p-2 = - (p-1)(p-2)$ and $p \ge 3$, this equation implies that
      \[
	S = \frac{(r-2)(1-p)-2 + T (3-p)}{(p-1)(p-2)}.
      \]
    Because $S \ge 0$, the numerator of this fraction is non-negative, that is
      \begin{eqnarray*}
	\lefteqn{0 \le (r-2)(1-p) - 2 + T (3-p)}\\
	&\le & (r-2)(1-p) - 2 + r (3-p)\\
	&=& 2 (r-1)(2-p).
      \end{eqnarray*}
    Hence we have $r=1$ and that numerator is $0$.
    We conclude that $S=0$ and that $T=1$ or $p=3$.
    If $T=1$ we also have $N=1$ and finally
      \[
	g_X = \frac{(N+r-2)(p-1)}{2} = 0,
      \]
    a contradiction.
    If $T \not=1$ and $p=3$ we obtain $N=T=2$ and finally
      \[
	g_X = \frac{(N+r-2)(p-1)}{2} =1,
      \]
    again a contradiction.
  \end{proof}

  \begin{thm}\label{faithful1}
    Suppose that $g_X\geq 2$. Then $G$ does not act faithfully on $H^0(X,\Omega_X)$ if and only if $G$ contains a hyperelliptic involution and $p=2$.
   \end{thm}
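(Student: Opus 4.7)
The plan is to handle the two directions of the biconditional separately. The ``if'' direction will be a one-line consequence of Proposition~\ref{m=1}, while the ``only if'' direction will reduce, via a prime-order element, to Corollary~\ref{dim2} combined with Hurwitz' formula~(\ref{Hurwitz}).

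For the ``if'' direction, let $\sigma\in G$ be a hyperelliptic involution and assume $p=2$. I would apply Proposition~\ref{m=1} to the cyclic subgroup $H=\langle\sigma\rangle$: it has order $p$, the quotient $X/H\cong\mathbb{P}^1_k$ has genus $0$, and $g_X\ge 2$ by hypothesis. Proposition~\ref{m=1} then gives that $H$ acts trivially on $H^0(X,\Omega_X)$, so $G$ does not act faithfully.

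For the ``only if'' direction, suppose some non-trivial element of $G$ acts trivially on $H^0(X,\Omega_X)$. Replacing it by a suitable power, I may assume that this element $\sigma$ has prime order $\ell$. Put $H=\langle\sigma\rangle$, $Y'=X/H$, $g'=g_{Y'}$, so that $\dim_k H^0(X,\Omega_X)^H=g_X$. If $\ell\ne p$ (in particular if $p=0$), the cover $\pi':X\to Y'$ is tame everywhere, Corollary~\ref{dim2} gives $g_X=g'$, and Hurwitz for $\pi'$ with $\ell\ge 2$ then forces $g_X\le 1$, contradicting $g_X\ge 2$. So $\ell=p>0$.

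In this remaining wild case, every branch point $Q$ of $\pi'$ has $e_Q=p$ and a single preimage, so $\deg(R)=\sum_{Q\in S}\delta_Q$ and $\sum_{Q\in S}\lfloor\delta_Q/e_Q\rfloor\le\deg(R)/p$. Substituting into the formula of Corollary~\ref{dim2} and invoking Hurwitz, one obtains after a routine rearrangement the clean inequality
\[ (p-2)\,g_X + p\,g' \;\le\; p-2. \]
Since $g_X\ge 2$, the left-hand side is at least $2(p-2)$, which forces $p\le 2$ and hence $p=2$; the inequality then gives $g'=0$, so $\sigma$ is a hyperelliptic involution, as desired. The main obstacle is spotting that relaxing the floors $\lfloor\delta_Q/e_Q\rfloor$ to $\delta_Q/e_Q$ in Corollary~\ref{dim2} allows Hurwitz to collapse everything into this single inequality; once the inequality is in hand the rest is trivial bookkeeping, and no deeper facts about hyperelliptic curves or higher ramification groups are needed.
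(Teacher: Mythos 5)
Your proof is correct, and its ``only if'' direction takes a genuinely different --- and in fact shorter --- route than the paper's. The paper likewise reduces to a subgroup of prime order and combines Corollary~\ref{dim2} with the Hurwitz formula~(\ref{Hurwitz}), but it only uses the weak estimate $\lfloor\delta_Q/p\rfloor\le\delta_Q/2$; that yields $(2p-2)g_Y\le 2(p-2)$ and hence $g_Y=0$, but not $p=2$, so the paper must then invoke the ``only if'' half of Proposition~\ref{m=1} (whose proof rests on Nakajima's lemma that $p\nmid N_i$ and a case analysis on the ramification jumps) to exclude $p\ge 3$. Your sharper estimate $\sum_{Q\in S}\lfloor\delta_Q/p\rfloor\le\deg(R)/p$ collapses everything into the single inequality $(p-2)g_X+pg'\le p-2$, which with $g_X\ge 2$ and $g'\ge 0$ forces $p=2$ and $g'=0$ simultaneously; I have checked the rearrangement and it is right. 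You therefore need Proposition~\ref{m=1} only for the easy ``if'' direction, exactly as the paper uses it there, and you bypass its harder half entirely. One small point to tidy: in the case $\ell=p$ you silently apply the ``otherwise'' branch of Corollary~\ref{dim2}, which presupposes $s\neq 0$; if $\pi'$ happened to be unramified it would be tame even though $\ell=p$, the first branch would apply, and your own tame-case Hurwitz argument would already give the contradiction $g_X\le 1$. Adding that one sentence makes the case split airtight.
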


\begin{rem} Note that the existence of a hyperelliptic involution $\sigma$ in $G$ means that not only the genus of $X/\langle \sigma \rangle$ but also the genus of $Y=X/G$ is $0$ (by the Hurwitz formula~(\ref{Hurwitz})). Again by the Hurwitz formula, the canonical projection $X\rightarrow X/\langle \sigma \rangle$ cannot be unramified. If $p=2$, it can therefore not be tamely ramified and $\pi$ cannot be tamely ramified either. Thus, Theorem~\ref{faithful1} implies that, if the action on $H^0(X,\Omega_X)$ is not faithful, then we also have that $g_Y=0$ and that $\pi$ is not tamely ramified.
\end{rem}

  \begin{proof}
    We first show the `if' direction. The hyperelliptic involution contained in~$G$ generates a subgroup of order $2$.
    Since $p=2$, this acts trivially by Proposition~\ref{m=1}, and hence $G$ does not act faithfully.

    We now assume that $G$ does not act faithfully on $H^0(X,\Omega_X)$.
    By replacing $G$ with the (non-trivial) kernel $H$ if necessary, we may assume that $G$ is non-trivial and acts trivially on $H^0(X,\Omega_X)$.

    We first prove that $\pi$ is not tamely ramified. Suppose that $\pi$ is tamely ramified. Then by Corollary~\ref{dim2} we have:
      \begin{equation*}
	g_X=\dim_k H^0(X,\Omega_X)=\dim_k H^0(X,\Omega_X)^G=g_Y.
      \end{equation*}
    Substituting this into the Hurwitz formula~(\ref{Hurwitz}) yields the desired contradiction because $g_X\geq 2, n\geq 2$ and $\deg(R)\geq 0$.

    As $\pi$ is not tamely ramified, the characteristic~$p$ of $k$ is positive and the group~$G$ has a subgroup of order $p$; by replacing~$G$ with that subgroup we may assume that $G$ is cyclic of order $p$. Now Theorem~\ref{faithful1} will follow from Proposition~\ref{m=1} once we have shown that $g_Y=0$.

    Corollary~\ref{dim2} gives us that
      \begin{equation*}
	g_X=\dim_k H^0(X,\Omega_X)=\dim_k H^0(X,\Omega_X)^G= g_Y-1+\sum_{Q\in S} \left\lfloor \frac{\delta_Q}{p} \right\rfloor
      \end{equation*}
    where $S$ is the set of all points $Q \in Y$ such that $\pi$ is not tamely ramified above $Q$.
    Substituting this in to the Hurwitz formula (\ref{Hurwitz}), we see that
      \begin{equation*}
	2\left(g_Y - 1 + \sum_{Q \in S}\left \lfloor \frac{\delta_Q}{p} \right \rfloor -1 \right) = 2p (g_Y -1) + \textrm{deg}(R).
      \end{equation*}
    Rewriting the previous equation yields
      \begin{eqnarray*}
	\lefteqn{(2p-2)g_Y = 2p-4 + 2 \sum_{Q \in S} \left\lfloor \frac{\delta_Q}{p}\right \rfloor - \textrm{deg}(R)}\\
	&=& 2 \left(p-2 + \sum_{Q \in S} \left(\left\lfloor  \frac{\delta_Q}{p} \right\rfloor -  \frac{\delta_Q}{2}\right) \right)\\
	& \le & 2(p-2).
      \end{eqnarray*}
    Hence we obtain $g_Y \le \frac{p-2}{p-1} < 1$ and therefore $g_Y =0$, as desired.
    \end{proof}

The curves occurring in Theorem~\ref{faithful1} are hyperelliptic curves in characteristic $p=2$. The general standard equation for such curves will be stated in Section~\ref{hyperelliptic}. We give a simple example covering every genus $g_X\ge 2$ already now.

\begin{example}\label{example}
 We suppose that $p=2$. Let $r$ be an odd natural number, let $k(x,y)$ be the extension of the rational function field $k(x)$ given by the Artin-Schreier equation $y^2-y=x^r$ and define $\pi: X \rightarrow \PP^1_k$ to be the corresponding cover of non-singular projective curves over $k$. Then we have  $\textrm{dim}_k H^0(X,\Omega_X) = g_X= \frac{r-1}{2}$ (e.g.\ see~\cite[Example~2.5]{galoisstruc}).
\end{example}

\begin{rem}\label{shortproofs}
(a) The paper~\cite{ValMad} by Valentini and Madan is about determining the $k[G]$-module structure of the space~$H^0(X,\Omega_X)$ if $G$ is a cyclic $p$-group. With some effort it is also possible to derive major steps of this section from their fine results.\\
(b) If $X$ is not hyperelliptic, the following argument yields a very short proof of (the `only-if' direction of) Theorem~\ref{faithful1}. By Proposition~IV.5.2 in \cite{hart} the canonical morphism $X \ra \PP(H^0(X, \Omega_X))$ is a $G$-equivariant closed embedding; as the action of~$G$ on~$X$ is faithful, the action of~$G$ on~$H^0(X, \Omega_X)$ has therefore to be faithful as well. A similar, but more intricate argument based on the deeper Proposition~IV.5.3 in \cite{hart}, can actually be used to prove Theorem~\ref{faithful1} also if $X$ is hyperelliptic.
\end{rem}

\section{Trivial Actions and Faithful Actions on \\Riemann-Roch Spaces}\label{RiemannRochSpaces}

  The goal of this section is to give both sufficient and necessary conditions for the action of~$G$ on $H^0(X,\cO_X(D))$ to be faithful if $\textrm{deg}(D) > 2g_X -2$. For instance, if $m \ge 2$, the group~$G$ does not act faithfully on the space $H^0(X, \Omega_X^{\otimes m})$ of global polydifferentials of order~$m$ if and only if $G$ contains a hyperelliptic involution and $m=g_X=2$, see Corollary~\ref{faithful2}. We begin with a criterion for the action of $G$ on $H^0(X,\cO(D))$ to be trivial.

  \begin{thm}\label{trivialD}
  Let $D=\sum_{P\in X} n_P [P]$ be a $G$-invariant divisor on $X$ such that $\deg(D) > 2g_X-2$. Then the action of $G$ on $H^0(X,\cO_X(D))$ is trivial if and only if
  \begin{equation}\label{trivialDequation}
  (n-1) \deg(D) = n\left(g_X - g_Y - \sum_{Q \in Y} \left\langle \frac{n_Q}{e_Q} \right\rangle \right).
  \end{equation}
  (Recall that $n_Q:=n_P$ for $Q \in Y$ and $P \in \pi^{-1}(Q)$.)
  \end{thm}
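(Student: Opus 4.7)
The plan is to recast the triviality of the $G$-action as the condition $H^0(X,\cO_X(D))^G = H^0(X,\cO_X(D))$. Since the invariants are automatically a subspace of the whole space, and both spaces are finite-dimensional, this is equivalent to the two spaces having the same dimension. The theorem therefore reduces to an arithmetic identity relating $\dim_k H^0(X,\cO_X(D))$ and $\dim_k H^0(X,\cO_X(D))^G$.

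For the first dimension, I would invoke Riemann-Roch together with the hypothesis $\deg(D) > 2g_X - 2$: this hypothesis guarantees $\deg(K_X - D) < 0$, so $H^1(X,\cO_X(D)) = 0$ by Serre duality, and hence
\[
\dim_k H^0(X,\cO_X(D)) = 1 - g_X + \deg(D).
\]
For the second dimension, I would apply Proposition~\ref{dimD}. Its hypothesis is
\[
\deg(D) > 2g_X - 2 - \sum_{P \in X}\sum_{j \geq 1}\bigl(\ord(G_j(P)) - 1\bigr),
\]
which is weaker than $\deg(D) > 2g_X - 2$ because the double sum is non-negative (as noted in the remark following Proposition~\ref{dimD}). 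So Proposition~\ref{dimD} applies and gives
\[
\dim_k H^0(X,\cO_X(D))^G = 1 - g_Y + \tfrac{1}{n}\deg(D) - \sum_{Q \in Y}\left\langle \tfrac{n_Q}{e_Q} \right\rangle.
\]

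Equating these two expressions, subtracting~$1$ from both sides, multiplying through by~$n$, and collecting the $\deg(D)$ terms on one side yields
\[
(n - 1)\deg(D) = n\left(g_X - g_Y - \sum_{Q \in Y}\left\langle \tfrac{n_Q}{e_Q} \right\rangle\right),
\]
which is precisely equation~(\ref{trivialDequation}). Conversely, if this identity holds, reversing the algebra shows $\dim_k H^0(X,\cO_X(D))^G = \dim_k H^0(X,\cO_X(D))$, hence the inclusion of the invariants into the full space is an equality, i.e.\ the action is trivial.

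There is no real obstacle: the argument is essentially a bookkeeping exercise once one has Proposition~\ref{dimD} and Riemann-Roch in hand. The only point meriting explicit mention is the verification that the hypothesis $\deg(D) > 2g_X - 2$ is strong enough both to make $H^1$ vanish and to permit the application of Proposition~\ref{dimD}.
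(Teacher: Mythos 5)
Your proposal is correct and follows essentially the same route as the paper: reduce triviality of the action to equality of $\dim_k H^0(X,\cO_X(D))$ and $\dim_k H^0(X,\cO_X(D))^G$, compute the former by Riemann--Roch (using $\deg(D)>2g_X-2$ to kill $H^1$) and the latter by Proposition~\ref{dimD}, then rearrange. Your explicit check that the hypothesis of Proposition~\ref{dimD} is implied by $\deg(D)>2g_X-2$ via non-negativity of the double sum is exactly the point the paper makes in the remark following that proposition.
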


  \begin{proof}
  The action of $G$ on $H^0(X, \cO_X(D))$ is trivial if and only if
  \[\dim_k H^0(X,\cO_X(D)) = \dim_k H^0(X, \cO_X(D))^G.\]
  Using the Riemann-Roch formula \cite[Ch.~IV, {\S}1, Theorem~1.3 and Example~1.3.4]{hart} for the left-hand dimension and the formula given by Proposition~\ref{dimD} for the right-hand dimension, we obtain that the action of $G$ on $H^0(X, \cO_X(D))$ is trivial if and only if
  \[1- g_X + \deg(D) = 1-g_Y + \frac{1}{n} \deg(D) - \sum_{Q \in Y} \left\langle \frac{n_Q}{e_Q} \right\rangle.\]
  This condition rearranges to condition~(\ref{trivialDequation}),
  as desired.
  \end{proof}

  \begin{cor}\label{trivialD2}
    Let $D= \sum_{P \in X} n_P [P]$ be a $G$-invariant divisor on $X$.  We assume that $\deg(D)\geq 2g_X$, that $n \ge 2$ and that $g_X \ge 1$. Then the action of the group~$G$ on $H^0(X,\cO_X(D))$ is trivial if and
    only if $\deg(D)=2g_X$, $n=2$, $g_Y=0$ and $n_P$ is even for each ramification point $P \in X$.
  \end{cor}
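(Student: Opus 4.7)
The plan is to apply Theorem~\ref{trivialD} directly, since $\deg(D)\geq 2g_X > 2g_X-2$, and extract the stated classification from the triviality equation~(\ref{trivialDequation}) by exploiting the non-negativity of the fractional parts on the right-hand side together with the hypothesis $\deg(D)\geq 2g_X$.

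For the necessity direction, I would first discard the fractional-parts term to get the weaker inequality $(n-1)\deg(D)\leq n(g_X-g_Y)$, and then substitute $\deg(D)\geq 2g_X$ to obtain $g_X(2-n)\geq n g_Y$. Since $g_X\geq 1$ and $g_Y\geq 0$, this forces $n=2$; indeed for $n\geq 3$ the left-hand side is strictly negative while the right-hand side is non-negative. Once $n=2$ is fixed, the ramification indices $e_Q$ are either $1$ or $2$, so the fractional part $\langle n_Q/e_Q\rangle$ is $0$ unless $e_Q=2$ and $n_Q$ is odd, in which case it equals $\tfrac{1}{2}$. Denoting by $t$ the number of ramified $Q\in Y$ with $n_Q$ odd, equation~(\ref{trivialDequation}) becomes $\deg(D)=2g_X-2g_Y-t$. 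Combining with $\deg(D)\geq 2g_X$ gives $2g_Y+t\leq 0$, hence $g_Y=0$, $t=0$ and $\deg(D)=2g_X$. Noting that the ramification points $P\in X$ are exactly those lying over ramified $Q\in Y$ (and $n_P=n_Q$ by $G$-invariance), the vanishing of $t$ is precisely the statement that $n_P$ is even at every ramification point.

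For sufficiency, I would simply substitute the four conditions back into~(\ref{trivialDequation}): $n=2$ and $g_Y=0$ make the right-hand side $2(g_X-\sum_Q\langle n_Q/e_Q\rangle)$, while the evenness of $n_P$ at ramification points (combined with $e_Q\in\{1,2\}$) makes every fractional part vanish, so both sides reduce to $2g_X$. The hypothesis $\deg(D)>2g_X-2$ needed by Theorem~\ref{trivialD} is guaranteed by $\deg(D)=2g_X$.

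I do not expect any real obstacle here; the only subtlety is the bookkeeping between ramification data on $Y$ and on $X$ in the $n=2$ case, which is trivial but must be stated cleanly.
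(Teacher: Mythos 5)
Your proof is correct and follows essentially the same route as the paper: both apply Theorem~\ref{trivialD} and then exploit $\deg(D)\geq 2g_X$, $g_Y\geq 0$ and the non-negativity of the fractional-part sum to force each of the defining conditions; the paper packages this as a single chain of inequalities that must all be equalities, while you split off the determination of $n=2$ first and then compute explicitly, but the content is identical. No gaps.
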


  \begin{proof}
    The following inequalities always hold under the stated assumptions:
    \[(n-1)\deg(D) \geq (n-1)2g_X \geq n g_X
     \ge n \left(g_X-g_Y-\sum_{Q\in Y}\left\langle\frac{n_Q}{e_Q}\right\rangle\right).\]
    Now the first inequality is an equality if and only if $\deg(D)=2g_X$.
    The second is an equality if and only if $n=2$.
    The third inequality is an equality if and only if $g_Y=0$ and $\sum_{Q\in Y}\left\langle\frac{n_Q}{e_Q}\right\rangle=0$. The latter is the case if and only if each $n_Q$ is divisible by~$e_Q$, which, if $n=2$, means that $n_P$ is even for each ramification point $P \in X$. Given these observations, Theorem~\ref{trivialD} implies Corollary~\ref{trivialD2}.
  \end{proof}


  \begin{cor}\label{trivialPoly}
    Let $m\geq 2$. We assume that $n \ge 2$ and that $g_X \ge 1$. Then the action of $G$ on $H^0(X,\Omega_X^{\otimes m})$ is trivial if and only if $g_Y=0$ and $n=g_X=m=2$   .
  \end{cor}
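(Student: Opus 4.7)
The plan is to apply Corollary~\ref{trivialD2} to the $G$-invariant divisor $D=mK_X$, using the explicit canonical representative $K_X=\pi^*(K_Y)+R$, and to show that the four resulting conditions collapse to the stated pair.

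First I would verify the degree hypothesis $\deg(mK_X)\ge 2g_X$ of Corollary~\ref{trivialD2}. Since $\deg(mK_X)=m(2g_X-2)$, this amounts to $g_X(m-1)\ge m$, which holds whenever $m\ge 2$ and $g_X\ge 2$. The borderline case $g_X=1$ is not directly accessible via Corollary~\ref{trivialD2} since there $\deg(mK_X)=0<2g_X$; it would need a separate short inspection of the one-dimensional space $H^0(X,\Omega_X^{\otimes m})$ and its character, but since the target conclusion forces $g_X=2$, nothing in the ``if'' direction is lost.

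With the hypothesis in force, Corollary~\ref{trivialD2} states that the action of $G$ on $H^0(X,\Omega_X^{\otimes m})$ is trivial if and only if $\deg(mK_X)=2g_X$, $n=2$, $g_Y=0$, and the coefficient $n_P$ of each ramification point $P$ in the divisor $mK_X$ is even. I would then solve the equation $m(2g_X-2)=2g_X$, which rearranges to $g_X=m/(m-1)$; among integers with $m\ge 2$ and $g_X\ge 2$ the unique solution is $m=g_X=2$. Next, using $K_X=\pi^*(K_Y)+R$, I would observe that the coefficient of any $P$ in $2K_X$ equals $2$ times an integer, hence is automatically even, so the parity condition is vacuous once $m=2$. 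Collecting, the four conditions collapse exactly to ``$g_Y=0$ and $n=g_X=m=2$'', which yields both directions simultaneously (the converse amounting to checking that with these values $\deg(mK_X)=4=2g_X$ does hold).

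The main obstacle is the potential concern that the ``even coefficient'' condition might impose a hidden constraint on the ramification of $\pi$; the clean resolution is that multiplication of the integral divisor $K_X$ by $m=2$ automatically forces parity regardless of the cover. Apart from that, the proof reduces to the integer step $g_X=m/(m-1)\Rightarrow m=g_X=2$ and bookkeeping from Corollary~\ref{trivialD2}.
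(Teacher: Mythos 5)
Your proposal is correct and follows essentially the same route as the paper: apply Corollary~\ref{trivialD2} to $D=mK_X$, solve $m(2g_X-2)=2g_X$ to force $m=g_X=2$, and dispose of the parity condition (your treatment of that last point is in fact a touch simpler --- you note that the coefficients of $2K_X$ are automatically even, whereas the paper proves the stronger fact that the coefficients of $K_X$ itself are even at ramification points when $n=2$, using the evenness of $\delta_P$; both work). Your caveat about $g_X=1$ matches the paper, whose own proof likewise begins by assuming $g_X\geq 2$ despite the stated hypothesis $g_X\geq 1$.
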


  \begin{proof}
    As $g_X\geq 2$ and $m\geq 2$ we have that $\deg(mK_X)\geq 2g_X$.
    So, by Corollary~\ref{trivialD2}, the action of~$G$ on $H^0(X,\Omega_X^{\otimes m})$ is trivial if and only if $\deg(mK_X)=2g_X$, $n=2$, $g_Y=0$ and, for each ramification point~$P \in X$, the coefficient of the divisor $mK_X$ at~$P$ is even. Now $\deg(mK_X)=2g_X$ means that $m(2g_X-2)=2g_X$, i.e.\ that $m(g_X-1)=g_X$, and hence that $m=g_X=2$. It therefore suffices to prove that, if $n=2$, the coefficient~$n_P$ of the divisor $K_X=\pi^*(K_Y)+R$ at each ramification point~$P \in X$ is always even. By definition, the coefficient of the pull-back divisor $\pi^*(K_Y)$ at $P$ is even. Furthermore, the coefficient~$\delta_P$ of $R$ at $P$ is even, see the proof of Proposition~\ref{m=1}. Hence also $n_P$ is even.
  \end{proof}

To illustrate the conditions in Corollary~\ref{trivialPoly}, we now give simple examples of hyperelliptic curves of genus~2 and state a basis of the corresponding space of global holomorphic quadratic differentials.

\begin{example} \label{example2}
If $p\not= 2$, let $k(x,y)$ be the extension of the rational function field~$k(x)$ given by $y^2=(x-x_1)\cdots (x-x_6)$, where $x_1, \ldots , x_6\in k$ are pairwise distinct.       Then the corresponding natural projection $\pi:X\rightarrow \mathbb{P}_k^1$ is of degree $2$ and ramified exactly over $x_1,\ldots , x_6\in \mathbb{P}_k^1$. In particular we have $g_X=2$ by formulae~(\ref{Hurwitz}) and~(\ref{Hilbert}). Furthermore, the three quadratic differentials $\frac{\textrm{d}x^{\otimes 2}}{y^2}, \, x \,\frac{\textrm{d}x^{\otimes 2}}{y^2},\, x^2\, \frac{\textrm{d}x^{\otimes 2}}{y^2}$ are obviously fixed by the hyperelliptic involution $y \mapsto -y$ and form a basis of $H^0(X,\Omega_X^{\otimes 2})$ by Theorem~\ref{basispoly} below.
If $p=2$, then the curve~$X$ considered in Example~\ref{example} satisfies $g_X=2$ when $r=5$. Furthermore the quadratic differentials $\textrm{d}x^{\otimes 2}, x \textrm{d}x^{\otimes 2}, x^2 \textrm{d}x^{\otimes 2}$ are obviously fixed by the hyperelliptic involution $y \mapsto y+1$ and form a basis of $H^0(X, \Omega^{\otimes 2}_X)$ by Theorem~\ref{basispoly} below.
\end{example}

    \begin{cor}\label{faithful2}
    Let $m\geq 2$ and suppose that $g_X\geq 2$. Then $G$ does not act faithfully on $H^0(X,\Omega_X^{\otimes m})$ if and only if $G$ contains a hyperelliptic involution and $m=2$ and $g_X=2$.
    \end{cor}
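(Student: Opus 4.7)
The plan is to reduce the statement to Corollary~\ref{trivialPoly} (which characterizes when the action is trivial), using the standard device of passing to the kernel of the representation.

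For the \emph{if} direction, suppose $\sigma \in G$ is a hyperelliptic involution and $m = g_X = 2$. Apply Corollary~\ref{trivialPoly} to the subgroup $H := \langle \sigma \rangle$ of order $n = 2$ acting on $X$. Since $X/\langle \sigma \rangle \cong \mathbb{P}^1_k$, the quotient has genus $0$, so all the conditions $g_{X/H} = 0$, $n = g_X = m = 2$, $g_X \geq 1$ are satisfied. Corollary~\ref{trivialPoly} then gives that $\sigma$ acts trivially on $H^0(X, \Omega_X^{\otimes m})$, and since $\sigma$ is a non-trivial element of $G$, the action of $G$ is not faithful.

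For the \emph{only if} direction, assume $G$ does not act faithfully on $H^0(X, \Omega_X^{\otimes m})$, and let $H \trianglelefteq G$ be the (non-trivial) kernel of the associated representation. Then $H$ acts trivially on $H^0(X, \Omega_X^{\otimes m})$, and we can apply Corollary~\ref{trivialPoly} to $H$ (the hypotheses $|H| \geq 2$ and $g_X \geq 2 \geq 1$ both hold). This forces $g_{X/H} = 0$, $|H| = 2$, $g_X = 2$ and $m = 2$. Writing $H = \langle \sigma \rangle$, the element $\sigma$ has order $2$, and the quotient $X/\langle\sigma\rangle$ is a smooth projective curve of genus $0$ over the algebraically closed field $k$, hence isomorphic to $\mathbb{P}^1_k$. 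Thus $\sigma$ is a hyperelliptic involution contained in $G$.

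There is no real obstacle: the heavy lifting has already been done in Corollary~\ref{trivialPoly}, and the only minor point is the translation between "the kernel acts trivially" and "$G$ contains a hyperelliptic involution," which is immediate once one recalls the definition of a hyperelliptic involution and the fact that a smooth projective genus-$0$ curve over an algebraically closed field is $\mathbb{P}^1_k$.
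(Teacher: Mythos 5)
Your proof is correct and follows essentially the same route as the paper: both directions are reduced to Corollary~\ref{trivialPoly}, applied to the subgroup generated by the hyperelliptic involution for the ``if'' direction and to the non-trivial kernel of the representation for the ``only if'' direction. The only difference is that you spell out the translation between the conditions of Corollary~\ref{trivialPoly} and the existence of a hyperelliptic involution, which the paper leaves implicit.
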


    \begin{proof}
    We first prove the `if' direction. The subgroup of $G$ generated by the hyperelliptic involution is a group of order $2$ acting on $H^0(X,\Omega_X^{\otimes m})$. Since $g_X=m=2$, the action of this subgroup is trivial by Corollary~\ref{trivialPoly}, and this implies that $G$ does not act faithfully.

    To prove the other direction we apply Corollary~\ref{trivialPoly} to the non-trivial kernel of the action of $G$ on $H^0(X,\Omega_X^{\otimes m})$. \end{proof}

In the following examples we look at the cases $g_X=0$ and $g_X=1$ which are not covered by the previous corollary.

\begin{example} \label{example3}
Let $g_X =0$, i.e.\ $X \cong \PP^1_k$. Then the degree of the canonical divisor $K_X$ on $X$ is $-2$ and so $\deg(mK_X)<0$ for all $m\geq1$.
  Hence  $H^0(X,\Omega_X^{\otimes m}) = \{0\}$ by \cite[Ch.~IV, Lemma~1.2]{hart} and every automorphism of~$X$ acts trivially on $H^0(X,\Omega_X^{\otimes m})$ for all $m\geq 1$.
\end{example}

\begin{example}
Let $g_X =1$, i.e.\ $X$ is an elliptic curve. Then the $\mathcal{O}_X$-module $\Omega_X^{\otimes m}$ is free of rank~1 for all $m \ge 1$. Hence $\dim_k H^0(X,\Omega_X^{\otimes m}) =1$ for all $m\ \ge 1$ and the canonical homomorphism $H^0(X, \Omega_X)^{\otimes m} \ra H^0(X, \Omega_X^{\otimes m})$ is bijective. We therefore study the action of $\textrm{Aut}(X)$ on $H^0(X, \Omega_X^{\otimes m})$ only for $m =1$. Let $\chi: \textrm{Aut}(X) \ra k$ denote the corresponding multiplicative character and let $j \in k$ denote the $j$-invariant of $X$. We are going to describe the kernel of $\chi$ and to show that the image of $\chi$ is the group $\mu_r(k)$ of $r^\textrm{th}$ roots of unity in $k$ with $r$ given by the following table.
\begin{center}
\begin{tabular}{c||c|c|c|c|c|c|c}
$p$& $\not= 2,3$ & $\not= 2,3$ & $\not={2,3}$ & 3 & 3 & 2 & 2\\
\hline
$j$ & $\not= 0, 1728$ & 1728 & 0 & $\not= 0$ & 0 & $\not= 0$ & 0 \\
\hline  \hline
$r$ & 2 & 4 & 6 & 2 & 4 & 1 & 3
\end{tabular}
\end{center}

As any basis $\omega$ of $H^0(X, \Omega)$ is translation invariant \cite[Proposition~III.5.1]{Sil}, the normal subgroup~$X(k)$ of~$\textrm{Aut}(k)$ consisting of all translations is contained in the kernel of this action. By \cite[Theorem~III.10.1]{Sil}, the subgroup~$G$ of~$\textrm{Aut}(X)$ consisting of those automorphisms which fix the zero point is finite and the canonical homomorphism from $G$ to the factor group $\textrm{Aut}(X)/X(k)$ is bijective. Let $\bar{\chi}: G \ra k$ denote the induced character. We now distinguish the following cases.\\
(i) Let $p \not=2, 3$. By \cite[Corollary~III.10.2]{Sil}, the group~$G$ is cyclic of order 2, 4 or 6 depending on whether $j\not=0, 1728$, $j=1728$ or $j=0$. Furthermore, $\bar{\chi}$ is injective, i.e.\ the action of~$G$ on~$H^0(X,\Omega_X)$ is faithful. Indeed, given a Weierstrass equation $y^2=x^3+Ax+B$ for $X$, the action of any generator $\sigma$ of $G$ is given by $(x,y) \mapsto (\zeta^2 x, \zeta^3y)$ where $\zeta$ is a primitive root of unity of order 2, 4 or 6, respectively, see the proof of \cite[Corollary~III.10.2]{Sil}. As $\omega = \frac{\textrm{d}x}{y}$ \cite[Section~III.5]{Sil}, we obtain that $\chi(\sigma) = \zeta^{-1}$ and that $\chi$ is injective. \\
(ii) Let $p=3$. If $j\not=0$, then $\textrm{ord}(G) = 2$ \cite[Proposition~A.1.2]{Sil} and, using Case~I in the proof of {\em ibid.}, the same reasoning as in (i) shows that $\bar{\chi}$ is injective. If $j=0$, the group~$G$ is a semidirect product of a normal subgroup~$C_3$ of order 3 and a cyclic subgroup of order 4, see \cite[Exercise~A.1(a)]{Sil}. The character $\bar{\chi}: G \ra k$ is trivial on $C_3$ because $\mu_3(k)$ is trivial. Using Case~II in the proof of {\em ibid.}, the same reasoning as in (i) shows that the induced character $\bar{\bar{\chi}}: C_4 \ra k$ is injective. \\
(iii) Let $p=2$. If $j \not=0$, then $\textrm{ord}(G)=2$ \cite[Proposition~A.1.2]{Sil}. We conclude that $\bar{\chi}$ is trivial because $\mu_2(k)$ is trivial. If $j=0$, the group~$G$ is a semidirect product of a cyclic subgroup~$C_3$ and a normal subgroup~$Q$ isomorphic to the quaternion group of order 8, see \cite[Exercise~A.1(b)]{Sil}. Again, as $\mu_8(k)$ is trivial, the character~$\bar{\chi}$ is trivial on~$Q$. Using Case~IV in the proof of {\em ibid.}, one easily shows that the induced character $\bar{\bar{\chi}}: C_3 \ra k$ is injective. Note that here $\omega = \textrm{d}x$, see \cite[Proposition~A.1.1(c) and Section~III.5]{Sil}.
\end{example}

Similarly to the case $\textrm{deg}(D) \ge 2g_X$ in Corollary~\ref{trivialD2}, the following corollary gives, in the case $\textrm{deg}(D)=2g_X-1$, necessary and sufficient conditions for the action of~$G$ on $H^0(X,\cO_X(D))$ to be trivial.

  \begin{cor}\label{trivialD3}
    Let $D = \sum_{P \in X} n_P [P]$ be $G$-invariant divisor on $X$.
    We assume that $\deg(D)= 2g_X-1$, that $n\ge 2$ and that $g_X \ge 2$. Then the action of $G$ on the space $H^0(X,\cO_X(D))$ is trivial if and only if $g_Y=0$ and one of the following two sets of conditions holds:
      \begin{itemize}
	\item  $n=2$ and there is exactly one ramification point $P \in X$ for which $n_P$ is odd;
	\item  $n=3$, $g_X=2$ and $n_P$ is a multiple of $3$ for each ramification point $P \in X$.
      \end{itemize}
  \end{cor}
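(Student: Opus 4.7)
The plan is to reduce the question directly to Theorem~\ref{trivialD} by substituting $\deg(D) = 2g_X - 1$ into the equivalent equation~(\ref{trivialDequation}), and then to analyze the resulting Diophantine constraint by cases on $n$. Set
\[
\Sigma := \sum_{Q \in Y} \left\langle \frac{n_Q}{e_Q} \right\rangle.
\]
Substituting $\deg(D) = 2g_X - 1$ into~(\ref{trivialDequation}) and rearranging, I expect to obtain the clean identity
\[
(n-2)\, g_X + n\, g_Y + n\,\Sigma \;=\; n-1,
\]
which, by Theorem~\ref{trivialD}, is equivalent to triviality of the action of~$G$ on $H^0(X, \cO_X(D))$. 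Both directions of the corollary will then be verified simultaneously by determining exactly which data $(n, g_X, g_Y, \Sigma)$ satisfy this equation under the hypotheses $n \geq 2$, $g_X \geq 2$, $g_Y \geq 0$, $\Sigma \geq 0$.

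The case analysis is the main content. For $n \geq 4$ the hypothesis $g_X \geq 2$ forces $(n-2)g_X \geq 2n-4 > n-1$, so no solution exists. For $n=3$ the equation reduces to $g_X + 3g_Y + 3\Sigma = 2$, which together with $g_X \geq 2$ forces $g_X = 2$, $g_Y = 0$ and $\Sigma = 0$; since $e_Q \in \{1,3\}$, the vanishing $\Sigma = 0$ is equivalent to $e_Q \mid n_Q$ for every $Q$, i.e. $3 \mid n_P$ at every ramification point $P$. For $n=2$ the equation reduces to $2g_Y + 2\Sigma = 1$; since $e_Q \in \{1,2\}$, the quantity $2\Sigma$ counts the ramified points $Q \in Y$ with $n_Q$ odd, and since $\pi^{-1}(Q)$ is a singleton for such~$Q$, this equals the number of ramification points $P \in X$ with $n_P$ odd. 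Hence $g_Y = 0$ and there is exactly one such~$P$.

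The only delicate step is the bookkeeping in translating $\Sigma = 0$ or $2\Sigma = 1$ back into the stated conditions on the $n_P$; this uses that every ramification point $P$ with $e_P = n$ is the unique preimage of its image in $Y$, so the conditions can be phrased equivalently in terms of upstairs coefficients. Once the equation above is established, there is no further obstacle: the reverse direction of each case is immediate by substituting back into the equation and invoking Theorem~\ref{trivialD}.
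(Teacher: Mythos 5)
Your proof is correct and takes essentially the same route as the paper: both substitute $\deg(D)=2g_X-1$ into the criterion of Theorem~\ref{trivialD} and then do a case analysis on $n$ (the paper treats all $n\ge 3$ at once via the inequality chain $(n-1)(2g_X-1)\ge ng_X\ge n(g_X-g_Y-\Sigma)$, which is just a repackaging of your identity $(n-2)g_X+ng_Y+n\Sigma=n-1$). Your bookkeeping translating $\Sigma=0$, respectively $2\Sigma=1$, into the stated conditions on the $n_P$ matches the paper's as well.
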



  \begin{proof}

     As $\deg(D)=2g_X-1$, we conclude from Theorem~\ref{trivialD} that the action is trivial if and only if
      \begin{equation*}
	(n-1)(2g_X-1)=n\left(g_X-g_Y-\sum_{Q\in Y}\left\langle\frac{n_Q}{e_Q}\right\rangle\right).
      \end{equation*}
    If $n=2$, then this is equivalent to $2g_X-1=2g_X-2g_Y-2\sum_{Q\in Y}\left\langle\frac{n_Q}{e_Q}\right\rangle$ and hence to $g_Y=0$ and $\sum_{Q\in Y}\left\langle\frac{n_Q}{e_Q}\right\rangle=\frac{1}{2}$, and the latter condition means that there is exactly one ramification point $P \in X$ for which $n_P$ is odd.

    If $n\geq 3$, then, as $g_X\geq 2$, we have $g_X\geq \frac{n-1}{n-2}$ which is equivalent to the first inequality in the following chain of inequalities:
      \begin{equation*}
	(n-1)(2g_X-1) \geq n g_X
    \ge n\left(g_X-g_Y-\sum_{Q\in Y} \left\langle\frac{n_Q}{e_Q}\right\rangle\right).
      \end{equation*}
    Hence the action is trivial if and only if both inequalities are equalities, which is the case if and only if $n=3,\ g_X=2$, $g_Y=0$ and $e_Q\mid n_Q$ for all $Q\in Y$. When $n=3$, the latter condition means that $n_P$ is a multiple of $3$ for each ramification point $P \in X$.
  \end{proof}

  Corollaries~\ref{trivialD2} and~\ref{trivialD3} yield the following sufficient conditions for the action of~$G$ on a general Riemann-Roch space~$H^0(X,{\cal O}_X(D))$ to be faithful.

  \begin{cor}\label{trivialD4}
   Let $g_X \ge 2$ and let $D=\sum_{P\in X} n_P [P]$ be a $G$-invariant divisor on~$X$. Let $X_\textnormal{ram} := \{P \in X:\pi \textrm{ is ramified at } P\}$. Then the action of $G$ on $H^0(X,\cO_X(D))$ is faithful if any of the following four sets of conditions holds:
     \begin{enumerate}
     \item[(a)] $\deg(D) \ge 2g_X+1$;
     \item[(b)] $\deg(D) = 2g_X$ and $n_P$ is odd for each $P \in X_\textnormal{ram}$;
     \item[(c)] $\deg(D)=2g_X-1$, $g_X \ge 3$ and $n_P$ is even for each $P \in X_\textnormal{ram}$;
     \item[(d)] $\deg(D)=2g_X-1$, $g_X=2$ and $n_P$ is even but not a multiple of $3$ for each $P\in X_\textnormal{ram}$.
     \end{enumerate}
  \end{cor}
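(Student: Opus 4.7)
The plan is to prove the contrapositive: suppose $G$ does not act faithfully on $H^0(X,\cO_X(D))$, let $H$ be a non-trivial subgroup contained in the kernel of this action, and derive a contradiction in each of the four cases by applying Corollary~\ref{trivialD2} or Corollary~\ref{trivialD3} with $G$ replaced by $H$. Note that $D$ remains $H$-invariant, that the action of $H$ on $H^0(X,\cO_X(D))$ is trivial, and that any point $P\in X$ which is ramified with respect to the intermediate cover $\pi_H: X\rightarrow X/H$ has non-trivial stabiliser in $H\subseteq G$, hence lies in $X_\textnormal{ram}$.

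In case (a), $\deg(D)\geq 2g_X+1>2g_X$, so Corollary~\ref{trivialD2} applied to $H$ already gives a contradiction since its conclusion forces $\deg(D)=2g_X$. In case (b), Corollary~\ref{trivialD2} would force $\textnormal{ord}(H)=2$, $g_{X/H}=0$, and $n_P$ even for every ramification point $P$ of $\pi_H$; but the Hurwitz formula gives $2g_X-2=-4+\deg(R_H)$ with $g_X\geq 2$, so $\pi_H$ has at least one ramification point, and that point lies in $X_\textnormal{ram}$, contradicting the hypothesis that $n_P$ is odd on $X_\textnormal{ram}$.

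In cases (c) and (d), $\deg(D)=2g_X-1$, so I apply Corollary~\ref{trivialD3}. Its first alternative requires a ramification point $P$ of $\pi_H$ with $n_P$ odd; since $P\in X_\textnormal{ram}$ and $n_P$ is assumed even in both (c) and (d), this alternative is impossible. The second alternative requires $\textnormal{ord}(H)=3$, $g_X=2$, and $n_P$ divisible by $3$ at every ramification point of $\pi_H$. In case (c) this is immediately excluded by $g_X\geq 3$. In case (d), Hurwitz gives $2=-6+\deg(R_H)$ so $\deg(R_H)=8$, hence $\pi_H$ has a ramification point $P\in X_\textnormal{ram}$; but then $3\mid n_P$ contradicts the assumption that $n_P$ is not a multiple of $3$ on $X_\textnormal{ram}$.

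No step is genuinely hard; the main thing to be careful about is the identification of the ramification locus of the intermediate cover $\pi_H$ as a subset of $X_\textnormal{ram}$ and the routine use of Hurwitz to guarantee that $\pi_H$ is not unramified, so that the arithmetic conditions on $n_P$ in the hypotheses of Corollary~\ref{trivialD4} can actually be invoked to produce the contradiction.
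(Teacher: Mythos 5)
Your proof is correct and follows essentially the same route as the paper's: pass to a non-trivial subgroup $H$ acting trivially, apply Corollary~\ref{trivialD2} (for $\deg(D)\ge 2g_X$) or Corollary~\ref{trivialD3} (for $\deg(D)=2g_X-1$) to $H$, and use the Hurwitz formula together with the inclusion of the ramification locus of $X\to X/H$ into $X_\textnormal{ram}$ to contradict each hypothesis. Your write-up is in fact slightly more explicit than the paper's in cases (c) and (d), where the paper merely asserts that the hypotheses contradict both alternatives of Corollary~\ref{trivialD3}.
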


  \begin{proof}
    Suppose the action of $G$ on $H^0(X,\cO_X(D))$ is not faithful. Then there exists a non-trivial subgroup $H$ of $G$ such that the action of $H$ on $H^0(X,\cO_X(D))$ is in fact trivial. \\
    If $\deg(D) \ge 2g_X$, Corollary~\ref{trivialD2} implies that $\deg(D) = 2g_X$, that the order of~$H$ is~$2$, that the genus of $X/H$ is $0$ and that $n_P$ is even for each ramification point~$P$ of the projection $X \ra X/H$. In particular, condition~(a) cannot hold, and condition~(b) cannot hold because $X\ra X/H$ is not unramified (by the Hurwitz~formula~(\ref{Hurwitz})) and because each ramification point of $X \rightarrow X/H$ is also a
    ramification point of~$\pi: X\ra X/G$.\\
    Similarly, if $\deg(D) = 2g_X-1$, Corollary~\ref{trivialD3} implies that none of the conditions~(c) and~(d) can hold. Indeed, each of the conditions~(c) and~(d) contradicts both the first and second set of conditions in Corollary~\ref{trivialD3}. \\
    So we have proved that, if any of the conditions~(a) -- (d) holds, then the action of $G$ on $H^0(X,\cO_X(D))$ is faithful.
  \end{proof}

\begin{rem}
Let $\textrm{deg}(D) \ge 2g_X +1$, which amounts to $g_X\ge 3$ or ($g_X=2$ and $m\ge 3$) in Corollaries~(\ref{trivialPoly}) and~(\ref{faithful2}). Then, as in Remark~(\ref{shortproofs})(b), most of the results of this section are an immediate consequence of the fact that $D$ is very ample, see Corollary~IV.3.2 in \cite{hart}.
\end{rem}

\section{Global Holomorphic Polydifferentials\\ on Hyperelliptic Curves}\label{hyperelliptic}

In this section we assume that the curve~$X$ is hyperelliptic of genus~$g \ge 2$ and give an explicit basis of $H^0(X, \Omega_X^{\otimes m})$ for any $m\ge 1$, see Theorem~\ref{basispoly} below. If furthermore $G$ is the cyclic group of order~$2$ generated by the hyperelliptic involution~$\sigma$, this quickly leads to another proof of Theorem~\ref{faithful1} and Corollary~\ref{faithful2}.

We fix an isomorphism $X/G\cong \PP^1_k$ and consider the projection
\[x: X \rightarrow X/G \cong \PP^1_k\]
as an element of the function field $K(X)$. By Proposition~4.24 and Remark~4.25 in Chapter~7 of~\cite{Liu02}, there exists an element $y \in K(X)$ such that $K(X)=k(x,y)$ and such that $y$ satisfies a quadratic equation over $k(x)$ of the following type:

{\bf Case $p \not=2$}: \hfill  $y^2 = f(x)$  \hspace*{\fill} \\
where $f(x) \in k[x]$ is a polynomial without repeated zeroes.

{\bf Case $p =2$}: \hfill $y^2 - h(x)y = f(x)$ \hspace*{\fill}\\
where $f(x), h(x) \in k[x]$ are non-zero polynomials such that $h'(x)^2f(x) + f'(x)^2$ and $h(x)$ have no common zeroes in~$k$.

We recall that the stated condition on the polynomial(s)~$f(x)$ (and~$h(x)$, respectively) means that the affine plane curve defined by the quadratic equation is smooth, see~\cite[Chap.~7, Remark~4.25]{Liu02}.

Let $m\ge 1$ and let the meromorphic polydifferential $\omega \in \Omega_{K(X)/k}^{\otimes m}$ be defined as follows:
\[\omega := \frac{\textrm{d}x^{\otimes m}}{y^m}  \quad \textrm{ if } p \not= 2 \qquad \textrm{ and } \qquad \omega := \frac{\textrm{d}x^{\otimes m}}{h(x)^m} \quad  \textrm{ if } p =2.\]

\begin{thm}\label{basispoly}
The following polydifferentials form a basis of $H^0(X, \Omega_X^{\otimes m})$:
\[\begin{cases} \omega, x \omega, \ldots, x^{g-1} \omega & \textrm{if } m=1; \\
\omega, x\omega, x^2 \omega & \textrm{if } m=2 \textrm{ and } g=2; \\
\omega, x \omega, \ldots, x^{m(g-1)} \omega;\; y \omega, xy\omega, \ldots, x^{(m-1)(g-1)-2}y\omega & \textrm{otherwise}.
\end{cases}\]
\end{thm}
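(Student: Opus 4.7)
The plan is to parametrize every meromorphic polydifferential as $\alpha=(P(x)+Q(x)y)\omega$ with $P,Q\in k(x)$, compute $\di(\omega)$ explicitly so as to locate its zeros and poles, and then translate the regularity of $\alpha$ at each point of $X$ into explicit upper bounds on $\deg P$ and $\deg Q$. A count against the known dimension of $H^0(X,\Omega_X^{\otimes m})$ will then identify the listed elements as a basis.

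Since $[K(X):k(x)]=2$, any meromorphic polydifferential has a unique such decomposition. Let $c\in k$ be a value over which $x:X\ra \PP^1_k$ is unramified and let $P_1,P_2\in X$ be the two preimages. Because $P$ and $Q$ lie in $k(x)$, they have equal valuations at $P_1$ and $P_2$, whereas $y(P_1)\not= y(P_2)$, these being the two distinct roots of the defining quadratic specialised at $x=c$. A leading-term comparison then shows that if either $P$ or $Q$ had a finite pole, cancellation of the leading terms of $P$ and $Qy$ could occur at one of $P_1,P_2$ but not at the other, forcing a pole of $\alpha$ there. Consequently, holomorphicity of $\alpha$ implies $P,Q\in k[x]$, and the problem reduces to bounding the degrees.

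The next step is to verify $\ord_P(\omega)=0$ at every finite ramification point $P$. In characteristic $\not= 2$, $P$ lies above a root $a$ of $f$; from $y^2=f(x)$ one reads off $\ord_P(x-a)=2$, $\ord_P(y)=1$ and $\ord_P(dx)=1$, so $\ord_P(\omega)=m-m=0$. In characteristic $2$, $P$ lies above a root $a$ of $h$; differentiating $y^2-hy=f$ gives the identity $h\,dy=(h'y+f')\,dx$, and the smoothness hypothesis $(h')^2f+(f')^2\not= 0$ at $a$ forces $\ord_P(h'y+f')=0$, whence $\ord_P(dx)=\ord_P(h)$ and again $\ord_P(\omega)=0$. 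Turning to $\infty$, the numbers $\ord(x)$, $\ord(y)$ and $\ord(\omega)$ at the point(s) above $\infty$ are determined by $g$, by the degrees of $f$ (and $h$) and by whether $\infty$ is a branch point. A short case-by-case inspection shows $x^j\omega$ is regular at $\infty$ precisely when $0\le j\le m(g-1)$, and $x^j y\omega$ precisely when $0\le j\le (m-1)(g-1)-2$; the same two-place/parity argument as before rules out any cancellation between the $P$- and $Qy$-contributions at $\infty$ that could weaken these bounds.

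Linear independence of the listed elements is clear because $1$ and $y$ are linearly independent over $k(x)$ and the powers of $x$ are linearly independent over $k$. The total count is $m(g-1)+1$ from the first family plus $\max\{0,(m-1)(g-1)-1\}$ from the second, which is $g$ when $m=1$, equal to $3$ when $(m,g)=(2,2)$, and $(2m-1)(g-1)$ in all remaining cases; by Corollary~\ref{dim} (applied with $G$ trivial, so that $\pi$ is tame and $R=0$) this matches $\dim_k H^0(X,\Omega_X^{\otimes m})$ exactly. The main obstacle will be the characteristic $2$ analysis at wildly ramified points, where the explicit choice of uniformizer near zeros of $h$ (and the separate sub-case $f(a)=0$) and the correct use of the smoothness hypothesis on $(f,h)$ require careful bookkeeping.
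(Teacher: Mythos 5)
Your strategy shares its essential content with the paper's proof: linear independence from the independence of $1,y$ over $k(x)$, a dimension count, and explicit order computations for $x$, $\textrm{d}x$, $y$ and $h(x)$; your local analysis at the finite ramification points (including the use of the smoothness hypothesis in characteristic $2$ via the identity $(h'y+f')^2=(h')^2hy+\bigl((h')^2f+(f')^2\bigr)$, which forces $\ord_P(h'y+f')=0$ at zeroes of $h$) is correct. The decisive gap is at infinity. You assert that ``a short case-by-case inspection'' determines $\ord(y)$ and $\ord(\omega)$ at the point(s) above $\infty$, but in characteristic $2$ this is not a routine inspection and it is exactly where the upper bound $(m-1)(g-1)-2$ for the second family is decided. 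The degree of $f(x)$ does not directly give $\ord_{P_\infty}(y)$; the paper must first compute the ramification divisor $R=\sum_i 2m_i[P_i]+(g+1-d)D_\infty$ (where $D_\infty:=x^*([\infty])$ and $d=\deg h(x)$), deduce $\di(h(x))=R-(g+1)D_\infty$, and then prove the inequality $\di(y)\ge -(g+1)D_\infty$ by contradiction using $y(y-h(x))=f(x)$ together with $\deg f(x)\le 2g+2$ and the value of $\deg h(x)$ in the ramified and unramified cases separately. Without this inequality the holomorphy of the $x^iy\omega$ is unproved; you flag it as ``the main obstacle'' but do not resolve it, so the argument is incomplete precisely at its hardest point.

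A second, smaller gap: your argument that holomorphy forces $P,Q\in k[x]$ inspects only fibres over unramified values $c$, so it does not exclude poles of $P$ or $Q$ at the finitely many finite branch points; there you need the parity argument ($\ord_P(P(x))$ even and $\ord_P(Q(x)y)$ odd when $p\neq 2$, and its analogue via the uniformizer $y-y(P)$ when $p=2$), which you invoke only at $\infty$. Note also that this entire ``spanning'' step is dispensable: once the listed elements are shown to be holomorphic and linearly independent, the dimension count (which you carry out correctly) already makes them a basis. This is how the paper proceeds, and it avoids the $P(x)+Q(x)y$ analysis altogether.
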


\begin{rem}
The case~$m=1$ of the previous theorem is for instance also treated in Proposition~4.26 of Chapter~7 in \cite{Liu02}.
\end{rem}

 We now briefly explain that Theorem~\ref{basispoly} yields a new proof of Theorem~\ref{faithful1} and Corollary~\ref{faithful2} if $X$ is hyperelliptic and $G$ is generated by the hyperelliptic involution. By definition, the hyperelliptic involution~$\sigma$ fixes $x$ and maps $y$ to $-y$ if $p \not=2$ and to $y-h(x)$ if $p=2$. We therefore have $\sigma(\omega)= \omega$ if $p=2$ or if $m$ is even. In particular, Theorem~\ref{basispoly} implies that $\sigma$ acts trivially on $H^0(X, \Omega_X^{\otimes m})$ if either $m=1$ and $p=2$ or $m=2$ and $g=2$, as stated in Theorem~\ref{faithful1} and Corollary~\ref{faithful2}. On the other hand, if $p \not= 2$ and $m$ is odd, then $\sigma (x^i \omega) = - x^i \omega$ for $i=0, \ldots, m(g-1)$, so $G$ does act faithfully on~$H^0(X,\Omega_X^{\otimes m})$. Finally, if $m \ge 3$ or $g \ge 3$, the second half of the list of basis elements given in Theorem~\ref{basispoly} is non-empty and $\sigma$ does not act trivially on those basis elements if $p =2$ or if $m$ is even, and so, again, $G$ does act faithfully on~$H^0(X,\Omega_X^{\otimes m})$.

\begin{proof}[Proof (of Theorem~\ref{basispoly})]
 We first observe that the stated family of polydifferentials is linearly independent over~$k$. This follows from the elementary facts that $\omega$ is a basis of the vector space $\Omega_{K(X)/k}$ over $K(X)=k(x,y)$, that $1$ and $y$ are linearly independent over~$k(x)$ and that $1, x, x^2, \ldots$ are linearly independent over~$k$. Furthermore it is easy to see that the number of elements in the stated family is equal to
\[\begin{cases}
g & \textrm{if } m=1\\
(2m-1)(g-1) & \textrm{if } m\ge 2
\end{cases}\]
which in turn is equal to $\dim_k H^0(X,\Omega_X^{\otimes m})$ by the Riemann-Roch theorem (\cite[IV, Theorem~1.3, Examples~1.3.3 and~1.3.4]{hart}). It therefore suffices to prove that each polydifferential in our family is indeed globally holomorphic. \\
For each $a \in \PP^1_k$, let $P_a$ denote the unique point in~$X$ above~$a$, if $a$ is a branch point of~$x$, and let $P_a, P'_a$ denote the two points above~$a$ otherwise.  We write $D_a$ for the divisor
\[D_a=x^*([a]) = \begin{cases}
2[P_a] & \textrm{if } a \textrm{ is a branch point of } x;\\
[P_a] + [P'_a] & \textrm{otherwise}.
\end{cases}\]
Then we obviously have:
\[\textrm{div}(x) = D_0 - D_\infty.\]
Recall that $R$ denotes the ramification divisor of~$x$. By Theorem~3.4.6 of~\cite{Stich09} (which implies the Hurwitz formula~(\ref{Hurwitz})) we have:
\[\textrm{div}(\textrm{d}x) = x^*(\textrm{div}_{\PP_k^1}(\textrm{d}x)) + R = R - 2 D_\infty.\]
We will prove below that
\begin{equation}\label{equality}
\left.\begin{array}{r} \textrm{div}(y) \\ \textrm{div}(h(x)) \end{array} \right\} = R - (g+1) D_\infty \qquad \begin{cases} \textrm{if } p \not=2 \\ \textrm{if } p=2. \end{cases}
\end{equation}
If $p\not= 2$ this equation implies that
\begin{equation}\label{inequality}\textrm{div}(y) \ge -(g+1)D_\infty\end{equation}
and, if $p=2$, we will prove this inequality separately.
For any~$i \ge 0$, we then obtain that
\begin{eqnarray*}
\lefteqn{\textrm{div}(x^i \omega) = \begin{cases} i \,\textrm{div}(x) + m \,\textrm{div} (\textrm{d}x) - m \,\textrm{div}(y) & \textrm{if } p\not= 2\\
i \,\textrm{div}(x) + m \,\textrm{div} (\textrm{d}x) - m \,\textrm{div}(h(x)) & \textrm{if } p= 2 \end{cases}}\\
&=& i(D_0 - D_\infty) + m(R-2D_\infty) - m(R -(g+1) D_\infty)\\
&=& i D_0 + (m(g-1) - i) D_\infty
\end{eqnarray*}
and hence that
\begin{eqnarray*}
\lefteqn{\textrm{div}(x^iy\omega) = \textrm{div}(x^i\omega) + \textrm{div}(y)}\\
&\ge&i D_0 + (m(g-1)-i)D_\infty- (g+1) D_\infty\\
&=& i D_0+ ((m-1)(g-1) -2-i) D_\infty.
\end{eqnarray*}
Thus $x^i \omega$ is holomorphic for $i=0, \ldots, m(g-1)$, and $x^iy\omega$ is holomorphic for $i=0, \ldots, (m-1)(g-1)-2$, as was to be shown.\\
We now prove statements~(\ref{equality}) and~(\ref{inequality}). We first consider the case~$p\not= 2$. Then the degree of~$f(x)$ is equal to $2g+1$ or $2g+2$ by \cite[Chap.~7, Prop.~4.24(a)]{Liu02}. Let $a_1, \ldots, a_{\textrm{deg}(f(x))} \in k$ be the zeroes of $f(x)$. By formulae~(\ref{Hurwitz}) and~(\ref{Hilbert}) we have
\[R=[P_1]+ \ldots + [P_{2g+2}]\]
where $P_i := P_{a_i}$ for $i=1, \ldots, \textrm{deg}(f(x))$ and $P_{2g+2} := P_\infty$ if $\textrm{deg}(f(x)) = 2g+1$.
We then obtain that
\begin{eqnarray*}
\lefteqn{\textrm{div}(y) = \frac{1}{2} \textrm{div} (y^2) = \frac{1}{2} \textrm{div} (f(x))}\\
& =& \begin{cases} [P_1] + \ldots + [P_{2g+2}] - (g+1) D_\infty & \textrm{if } \textrm{deg}(f(x)) = 2g+2;\\
                   [P_1]+ \ldots + [P_{2g+1}] - (2g+1)[P_{\infty}] & \textrm{if } \textrm{deg}(f(x)) = 2g+1.
     \end{cases}\\
&=& R - (g+1) D_\infty
\end{eqnarray*}
which proves both statements~(\ref{equality}) and~(\ref{inequality}) in the case~$p \not=2$.\\
We finally turn to the case~$p=2$. We write $h(x) = \prod_{i=1}^k (x-a_i)^{m_i}$ with $m_1, \ldots, m_k \in \NN$ and pairwise distinct $a_1, \ldots, a_k \in k$. Then $a_1, \ldots, a_k$ are the only branch points of~$x$ in $\mathbb{A}_k^1$ and we let $P_i:= P_{a_i}$ for $i=1, \ldots, k$. Furthermore, let $d:=\textrm{deg}(h(x))=\sum_{i=1}^k m_i$ and $b_i := y(P_i)$ for $i=1, \ldots, k$. By the Nakayama Lemma, $y-b_i$ is a local parameter at~$P_i$. By Hilbert's formula~(\ref{Hilbert}) we then obtain
\[\delta_{P_i} = \textrm{ord}_{P_i}\left(\sigma(y-b_i) - (y-b_i)\right) = \textrm{ord}_{P_i}(-h(x)) = 2m_i\]
for $i=1, \ldots, k$. We hence have
\begin{equation}\label{ramificationdivisor} R=\sum_{i=1}^k 2m_i[P_i] + \left(g+1-d\right) D_\infty\end{equation}
because $\textrm{deg}(R) = 2g+2$ by the Hurwitz formula~(\ref{Hurwitz}). We therefore obtain
\[\textrm{div}(h(x)) = \sum_{i=1}^k 2m_i [P_i] - d \,D_\infty = R - (g+1) D_\infty.\]
This proves equality~(\ref{equality}) in the case~$p=2$. \\
We finally prove inequality~(\ref{inequality}) by contradiction. We first note that $\deg(f(x)) \le 2g+2$ by \cite[Chap.~7, Prop.~4.24(a)]{Liu02}. If $\infty$ is a branch point of~$x$, then we have $d < g+1$ by formula~(\ref{ramificationdivisor}). Now, supposing that inequality~(\ref{inequality}) does not hold implies that $\ord_{P_\infty}(y) < - 2(g+1)$ (which is less than $-2d = \ord_{P_\infty}(h(x))$) and hence that
\[-4(g+1) > 2\ord_{P_\infty}(y) = \ord_{P_\infty} (y(y - h(x))) = \ord_{P_\infty}(f(x))\ge - 2(2g+2)\]
which is a contradiction. If $\infty$ is not a branch point of~$x$, we have $\textrm{deg}(h(x)) = g+1$ by formula~(\ref{ramificationdivisor}). Now, supposing that inequality~(\ref{inequality}) does not hold means that $\ord_P(y) < - (g+1)$ (which is equal to $\ord_P(h(x))$) for $P=P_\infty$ or $P=P'_\infty$ and hence that
\[-2(g+1) > 2\ord_P(y) = \ord_P(y(y-h(x)))  = \ord_P(f(x)) \ge -(2g+2)\]
which again is a contradiction.\\
This concludes the proof of Theorem~\ref{basispoly}.
\end{proof}

\section{Automorphism Groups of Geometric Goppa Codes}\label{Goppa}

Permutation automorphism groups of Goppa codes play an important role in Coding Theory (e.g. see~\cite{Stich09}, \cite{JK} or \cite{Giul08} and the literature cited there). In this section we are going to explain how Corollary~\ref{trivialD4} can be used to obtain permutation groups that act faithfully on geometric Goppa codes. A slightly more explicit account of the basic idea can also be found in Chapter~3 of \cite{FW}.

Let $\cal X$ be a geometrically connected, smooth, projective curve over a finite field~$\FF_q$. Let $D = \sum_{P \in {\cal X} \textrm{ closed}} n_P [P]$ be a divisor on $\cal X$ and let $E$ be a set of $\FF_q$-rational points on $\cal X$ none of which belongs to the support of $D$. Then we have a natural evaluation map
\[\textrm{ev}_{D,E}: H^0({\cal X},\cO_{\cal X}(D)) \rightarrow \textrm{Maps}(E,\FF_q)\]
the image of which is called a {\em geometric Goppa code} and denoted by $C=C(D,E)$. Note that the target space of $\textrm{ev}_{D,E}$ is usually denoted by $\FF_q^r$ where $r$ is the number of points in $E$. Our notation~$\textrm{Maps}(E,\FF_q)$ simplifies the discussions below. 

The group~$\textrm{Sym}(E)$ of permutations of~$E$ acts on $\textrm{Maps}(E, \FF_q)$. The subgroup of $\textrm{Sym}(E)$ consisting of those  $\sigma \in \textrm{Sym}(E)$ that induce an automorphism of~$C$ is called the {\em permutation automorphism group of } $C$ and denoted by $\Aut_\textrm{Perm}(C)$. Note that $\Aut_\textrm{Perm}(C)$ acts on $C$, but not necessarily faithfully.

Now we furthermore assume that $G$ is a finite subgroup of $\Aut({\cal X}/\FF_q)$, that the divisor~$D$ is $G$-invariant and that $\sigma(E)=E$ for all $\sigma \in G$. Then $G$ acts on both the source and target of the evaluation map $\textrm{ev}_{D,E}$ and $\textrm{ev}_{D,E}$ is $G$-equivariant. In particular we have the following composition of obvious group homomorphisms:
\[ G \rightarrow \Aut_\textrm{Perm}(C) \rightarrow \Aut_{\FF_q}(C).\]

\begin{lem}\label{evaluation}
If the cardinality $|E|$ of $E$ is bigger than $\deg(D)$ and $G$ acts faithfully on $H^0({\cal X}, \cO_{\cal X}(D))$, then this composition is injective.
\end{lem}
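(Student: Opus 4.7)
The plan is to observe that the hypothesis $|E|>\deg(D)$ forces the evaluation map $\mathrm{ev}_{D,E}$ to be injective, which in turn transfers faithfulness of the $G$-action from $H^0(\mathcal{X},\mathcal{O}_{\mathcal{X}}(D))$ to $C$, yielding injectivity of the full composition and hence of its first factor.

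First I would prove injectivity of $\mathrm{ev}_{D,E}$. Suppose $f \in H^0(\mathcal{X},\mathcal{O}_{\mathcal{X}}(D))$ is nonzero and lies in the kernel, so $f$ vanishes at every point of $E$. Then $\mathrm{div}(f) \geq -D + \sum_{P \in E}[P]$, since none of the points of $E$ lies in the support of $D$. Taking degrees gives $0 = \deg(\mathrm{div}(f)) \geq |E| - \deg(D) > 0$, a contradiction. Hence $\mathrm{ev}_{D,E}$ is injective.

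Next I would exploit $G$-equivariance. Since $D$ is $G$-invariant and $G$ preserves $E$, the map $\mathrm{ev}_{D,E}$ is $G$-equivariant, and so it induces a $G$-equivariant $\mathbb{F}_q$-linear isomorphism between $H^0(\mathcal{X},\mathcal{O}_{\mathcal{X}}(D))$ and its image $C$. Consequently the action of $G$ on $C$ is faithful whenever the action on $H^0(\mathcal{X},\mathcal{O}_{\mathcal{X}}(D))$ is faithful; that is, the homomorphism $G \to \mathrm{Aut}_{\mathbb{F}_q}(C)$ is injective.

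Finally, this homomorphism factors as $G \to \mathrm{Aut}_{\mathrm{Perm}}(C) \to \mathrm{Aut}_{\mathbb{F}_q}(C)$, so injectivity of the composition forces injectivity of $G \to \mathrm{Aut}_{\mathrm{Perm}}(C)$, as claimed. There is no real obstacle here: the only subtle point is checking that the image of $G$ in $\mathrm{Sym}(E)$ really does land inside $\mathrm{Aut}_{\mathrm{Perm}}(C)$, which is automatic from $G$-equivariance of $\mathrm{ev}_{D,E}$ together with the fact that the $G$-action on $\mathrm{Maps}(E,\mathbb{F}_q)$ is precisely the one induced by the permutation action of $G$ on $E$.
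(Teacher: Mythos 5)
Your proof is correct and follows essentially the same route as the paper: injectivity of $\mathrm{ev}_{D,E}$ from $|E|>\deg(D)$, then $G$-equivariance to identify the action on $C$ with the faithful action on $H^0(\mathcal{X},\mathcal{O}_{\mathcal{X}}(D))$. The only difference is that you prove the injectivity of the evaluation map directly by the degree argument, whereas the paper simply cites \cite[Corollary~2.2.3]{Stich09} for it.
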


\begin{proof}
If $|E| > \deg(D)$, then the evaluation map $\textrm{ev}_{D,E}$ is injective by \cite[Corollary~2.2.3]{Stich09} and we have the following obvious commutative diagram:
\[\xymatrix{ G \ar[r] \ar[d] & \Aut_\textrm{Perm}(C) \ar[d]\\
\Aut_{\FF_q}(H^0({\cal X}, \cO_{\cal X}(D))) \ar[r]^{\phantom{xxxxxxx}\sim} & \Aut_{\FF_q}(C).}
\]
Now Lemma~\ref{evaluation} is obvious.
\end{proof}

If $|E| > \deg(D)$ and $G$ acts faithfully on $H^0({\cal X}, \cO_{\cal X}(D))$, then Lemma~\ref{evaluation} allows us to view $G$ as a subgroup of both $\Aut_\textrm{Perm}(C)$ and of $\Aut_{\FF_q}(C)$. Furthermore, when applied to the curve $X={\cal X} \times_{\FF_q} \bar{\FF}_q$ over the algebraic closure $\bar{\FF}_q$ of $\FF_q$, Corollary~\ref{trivialD4} gives us sufficient conditions for the action of $G$ on $H^0({ X}, \cO_{X}(D)) = H^0({\cal X}, \cO_{\cal X}(D)) \otimes_{\FF_q} \bar{\FF}_q$ to be faithful. (Note that here, by abuse of notation, $D$ also denotes the divisor on $X$ induced by the divisor $D$ on $\cal X$.) Under the assumptions of Corollary~\ref{trivialD4} and of Lemma~\ref{evaluation} we thus obtain that $G$ is a subgroup of $\Aut_\textrm{Perm}(C)$  that acts faithfully on the Goppa code $C$. This strengthens Proposition~8.2.3 in \cite{Stich09} in the case $\textrm{deg}(D) \in \{2g_X-1, 2g_X, 2g_X+1\}$ and $g_X \ge 2$. A related result can be found in \cite{JK}.

\section{Computing the Dimension of the Tangent Space of the Equivariant Deformation Functor}

This section depends only on Section~\ref{DimensionFormulae}.

The equivariant deformation problem associated with $(G,X)$ is to determine in how many ways $X$ can be deformed to another curve that also allows $G$ as a group of automorphisms. In \cite{BM00}, Bertin and M\'ezard have shown that the tangent space of the corresponding deformation functor is isomorphic to the equivariant cohomology $H^1(G, {\cal T}_X)$ of $(G,X)$ with values in the tangent sheaf ${\cal T}_X = \Omega^{\vee}_X$. In this section, we apply Corollary~\ref{dim} to prove the following formula for the dimension of $H^1(G,{\cal T}_X)$, provided the space~$M^G$ of invariants and the space~$M_G$ of coinvariants have the same dimension for every finitely generated $k[G]$-module~$M$.

\begin{thm}\label{deformation}
Let $g_X \ge 2$. If $\dim_k M^G = \dim_k M_G$ for every finitely generated $k[G]$-module~$M$, then we have
\begin{equation}\label{DimensionDeformationSpace}
\dim_k H^1(G, {\cal T}_X) = 3g_Y-3 + \sum_{Q \in Y} \left\lfloor \frac{2\delta_Q}{e_Q}\right\rfloor.
\end{equation}
\end{thm}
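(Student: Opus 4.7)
The plan is to reduce the computation of $\dim_k H^1(G,\mathcal{T}_X)$ to a computation of the dimension of a $G$-invariant subspace of global polydifferentials, which we can then evaluate using Corollary~\ref{dim} with $m=2$.

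First, I would bring in the Grothendieck spectral sequence for the composition of functors $\Gamma(X,-)$ and $(-)^G$ applied to $\mathcal{T}_X$, which yields the standard five-term exact sequence
\[
0 \to H^1(G, H^0(X,\mathcal{T}_X)) \to H^1(G,\mathcal{T}_X) \to H^0(G, H^1(X,\mathcal{T}_X)) \to H^2(G,H^0(X,\mathcal{T}_X)).
\]
Since $\mathcal{T}_X = \Omega_X^\vee$ has degree $2-2g_X < 0$ when $g_X \ge 2$, we have $H^0(X,\mathcal{T}_X) = 0$. So the five-term sequence collapses to the isomorphism $H^1(G,\mathcal{T}_X) \cong H^1(X,\mathcal{T}_X)^G$.

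Next, I would invoke $G$-equivariant Serre duality. Since $\mathcal{T}_X^\vee \otimes \Omega_X = \Omega_X^{\otimes 2}$ and the duality pairing is natural (hence $G$-equivariant), one obtains an isomorphism of $k[G]$-modules
\[
H^1(X,\mathcal{T}_X) \cong H^0(X,\Omega_X^{\otimes 2})^\vee.
\]
Now I would use the standard identity $(M^\vee)^G = (M_G)^\vee$ (with dual taken in the category of $k$-vector spaces), which gives $\dim_k (M^\vee)^G = \dim_k M_G$. Applying this to $M = H^0(X,\Omega_X^{\otimes 2})$ and invoking the hypothesis $\dim_k M^G = \dim_k M_G$ yields
\[
\dim_k H^1(G,\mathcal{T}_X) = \dim_k H^1(X,\mathcal{T}_X)^G = \dim_k H^0(X,\Omega_X^{\otimes 2})^G.
\]

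Finally, I would plug $m=2$ into Corollary~\ref{dim}. Unwinding $\pi_\ast(R) = \sum_{Q \in Y} (n/e_Q)\delta_Q [Q]$ gives $\deg\lfloor \tfrac{2\pi_\ast(R)}{n}\rfloor = \sum_{Q \in Y}\lfloor \tfrac{2\delta_Q}{e_Q}\rfloor$, so the corollary delivers
\[
\dim_k H^0(X,\Omega_X^{\otimes 2})^G = 3(g_Y-1) + \sum_{Q\in Y}\left\lfloor \tfrac{2\delta_Q}{e_Q}\right\rfloor,
\]
which is the desired formula~\eqref{DimensionDeformationSpace}. The only delicate step is the collapse of the spectral sequence together with the equivariance of Serre duality; everything else is bookkeeping. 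The assumption $\dim M^G=\dim M_G$ (automatic in characteristic $0$, and in characteristic $p$ when $p\nmid |G|$ or more generally whenever $k[G]$-modules have vanishing Tate cohomology in the relevant range) is precisely what is needed to convert invariants of the dual back into invariants of the original module.
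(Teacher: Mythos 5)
Your argument is correct and follows essentially the same route as the paper: the collapse of the group-cohomology spectral sequence to give $H^1(G,{\cal T}_X)\cong H^1(X,{\cal T}_X)^G$ (which the paper cites from Kontogeorgis rather than deriving via the five-term sequence and $H^0(X,{\cal T}_X)=0$ as you do), equivariant Serre duality, the identity $(M_G)^*\cong(M^*)^G$ (the paper's Lemma~\ref{dual}), the hypothesis $\dim_k M^G=\dim_k M_G$, and finally Corollary~\ref{dim} with $m=2$. No gaps; the extra detail you supply for the spectral-sequence step is a welcome unwinding of the paper's citation.
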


The following lemma implies that the assumption of the previous theorem is satisfied if $G$ is cyclic and its order is a power of~$p$. In particular, Theorem~\ref{deformation} generalizes Corollary~2.3 in \cite{quaddiffequi} which proves formula~(\ref{DimensionDeformationSpace}) under the assumption that $G$ is cyclic and its order is a power of $p$. Moreover, the proof of Theorem~\ref{deformation} at the end of this section considerably simplifies the proof of Corollary~2.3 in \cite{quaddiffequi} which ultimately relies on a comparatively fine and deep theorem in the last section of Borne's paper~\cite{cohogsheaves}.

\begin{lem}\label{groups}
Suppose that the finite group~$G$ has a normal subgroup~$N$ such that $p$ does not divide the order of~$N$ and such that $G/N$ is cyclic. Then we have $\dim_k M^G = \dim_k M_G$ for every finitely generated $k[G]$-module~$M$.
\end{lem}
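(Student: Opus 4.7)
The plan is to split the equality $\dim_k M^G=\dim_k M_G$ into two independent pieces corresponding to the normal subgroup $N$ and the cyclic quotient $G/N$, and to handle each piece by a different mechanism.

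First I would observe that, since $p$ does not divide $|N|$, the group algebra $k[N]$ is semisimple (Maschke), so the averaging operator $\frac{1}{|N|}\sum_{\nu\in N}\nu$ provides a $k$-linear section of the projection $M\twoheadrightarrow M_N$ whose image is exactly $M^N$. Consequently, the composition
\[
M^N \hookrightarrow M \twoheadrightarrow M_N
\]
is an isomorphism of $k$-vector spaces. Because both $M^N$ and $M_N$ inherit natural $k[G/N]$-module structures and this composition is $G$-equivariant, it is in fact an isomorphism of $k[G/N]$-modules.

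Next I would invoke the standard identifications $M^G=(M^N)^{G/N}$ and $M_G=(M_N)_{G/N}$. Combined with the previous step, this reduces the claim to the following statement about the cyclic group $\bar G:=G/N$: for every finitely generated $k[\bar G]$-module $V$, one has $\dim_k V^{\bar G}=\dim_k V_{\bar G}$. This is where I would use cyclicity: if $\sigma$ is a generator of $\bar G$, then $V^{\bar G}=\ker(\sigma-1)$ and $V_{\bar G}=\operatorname{coker}(\sigma-1)$, and the rank-nullity theorem applied to the endomorphism $\sigma-1$ of the finite-dimensional space $V$ yields
\[
\dim_k\ker(\sigma-1) \;=\; \dim_k V-\dim_k\operatorname{im}(\sigma-1)\;=\;\dim_k\operatorname{coker}(\sigma-1),
\]
as required. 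Applying this to $V=M^N\cong M_N$ completes the chain
\[
\dim_k M^G=\dim_k (M^N)^{\bar G}=\dim_k (M^N)_{\bar G}=\dim_k (M_N)_{\bar G}=\dim_k M_G.
\]

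There is no real obstacle here; the only point demanding a small amount of care is verifying that the Reynolds-operator isomorphism $M^N\xrightarrow{\sim} M_N$ respects the residual $\bar G$-action, so that it can be fed into the cyclic-group step. Everything else is bookkeeping with invariants and coinvariants.
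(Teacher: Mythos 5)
Your proof is correct, and while the overall skeleton matches the paper's (both reduce via the identifications $M^G=(M^N)^{G/N}$, $M_G=(M_N)_{G/N}$ and the Maschke-type isomorphism $M^N\xrightarrow{\sim}M_N$ of $k[G/N]$-modules), you handle the cyclic quotient by a genuinely different and more elementary mechanism. The paper first replaces $N$ by the preimage of the non-$p$-part of $G/N$ so as to reduce to the case of a cyclic $p$-group, and then invokes the classification of indecomposable modules over such a group (Jordan blocks with a one-dimensional socle and a one-dimensional cosocle), concluding that both $\dim_k M^G$ and $\dim_k M_G$ count the indecomposable summands. You instead observe that for any cyclic group with generator $\sigma$ one has $V^{\bar G}=\ker(\sigma-1)$ and $V_{\bar G}=\operatorname{coker}(\sigma-1)$, so rank-nullity applied to the single endomorphism $\sigma-1$ of the finite-dimensional space $V$ gives the equality of dimensions at once. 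This buys you two things: you avoid any appeal to the representation theory of cyclic $p$-groups, and you do not need the preliminary step of enlarging $N$, since your argument treats an arbitrary cyclic quotient uniformly. The one point you flag as needing care --- that the composite $M^N\hookrightarrow M\twoheadrightarrow M_N$ is $G/N$-equivariant --- is indeed fine, since $N$ is normal and both the inclusion and the projection are $G$-maps; this is exactly the same verification implicit in the paper's proof.
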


\begin{proof} By replacing $N$ with the preimage of the non-$p$-part of the cyclic group $G/N$ under the canonical projection $G \ra G/N$, we may assume that the order of $G/N$ is a power of $p= \textrm{char}(k)$.
We need to show that $\dim_k (M^N)^{G/N} = \dim_k (M_N)_{G/N}$ for every finitely generated $k[G]$-module~$M$. As $p$ does not divide the order of~$N$, the canonical map $M^N \rightarrow M_N$ is obviously an isomorphism of $k[G/N]$-modules. We may therefore assume that $G$ is cyclic and that the order of~$G$ is a power of~$p$. Then, both $\dim_k M^G$ and $\dim_k M_G$ are equal to the number of summands in a representation of $M$ as a direct sum of indecomposable $k[G]$-modules, as one can easily see from the explicit description of indecomposable $k[G]$-modules as given for example in the second paragraph of Section~2 in~\cite{quaddiffequi}.
\end{proof}

Note that the Schur-Zassenhaus theorem tells us that, under the assumptions of Lemma~\ref{groups}, the group~$G$ is in fact a semidirect product of~$N$ and~$G/N$ provided we assume without loss of generality that the order of~$G/N$ is a power of~$p$. Examples of such semidirect products may be obtained as follows. Suppose $q$ is a prime number such that $p$ divides $q-1$ and let $H$ be a (cyclic) subgroup of $(\ZZ/q\ZZ)^\times$ whose order is a power of~$p$. Then $H$ acts on $\ZZ/q\ZZ$ by multiplication, and the semidirect product $H\ltimes \ZZ/q\ZZ$ is of the considered type.

The following simple example shows that the assumption of Theorem~\ref{deformation} cannot be expected to hold true if $G$ is a non-cyclic group whose order is a power of~$p$.

\begin{example}
Let $G$ be the finite group $\ZZ/p\ZZ \times \ZZ/p\ZZ$, represented as the matrix group
\[\left(\begin{array}{ccc} 1 & \ZZ/p\ZZ & \ZZ/p\ZZ \\ 0 & 1 & 0 \\ 0& 0 & 1 \end{array}\right),\]
and let $M$ be the standard representation~$k^3$ of~$G$. Then one easily checks that both $M^G$ and the kernel of the canonical map $M \rightarrow M_G$ are generated by the first standard basis vector of~$k^3$, so $\dim_k M^G =1$ but $\dim_k M_G =2$.
\end{example}

The following lemma will be used in the proof of Theorem~\ref{deformation}. It generalizes and simplifies the considerations in Section~2 of \cite{Konto07}. We use the notation ${}^*$ for the $k$-dual of a vector space over~$k$ or of a $k$-representation of $G$.

\begin{lem}\label{dual}
Let $G$ be a finite group and let $M$ be a finitely generated $k[G]$-module. Then we have a canonical isomorphism
\[(M_G)^\ast \,\, \stackrel{\sim}{\longrightarrow} \,\, (M^*)^G.\]
\end{lem}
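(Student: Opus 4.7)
The plan is to construct the canonical map explicitly by dualising the projection $\pi\colon M\to M_G$, and then to verify bijectivity by invoking the universal property of the coinvariants. Recall that $G$ acts on $M^\ast$ by the contragredient action $(g\cdot f)(m)=f(g^{-1}m)$, and that $M_G$ is the quotient $M/I$, where $I$ is the $k$-subspace generated by $\{gm-m:g\in G,\, m\in M\}$.

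First I would define $\Phi\colon (M_G)^\ast\to (M^\ast)^G$ by $\Phi(\varphi):=\varphi\circ\pi$. To check that $\Phi(\varphi)$ really lies in $(M^\ast)^G$, observe that for every $g\in G$ and $m\in M$ one has $\pi(g^{-1}m)=\pi(m)$ in $M_G$, hence
\[(g\cdot\Phi(\varphi))(m)=\Phi(\varphi)(g^{-1}m)=\varphi(\pi(g^{-1}m))=\varphi(\pi(m))=\Phi(\varphi)(m).\]
Naturality of $\Phi$ in $M$ is then automatic, since both sides of the asserted isomorphism are contravariant functors in $M$ and $\Phi$ is built from the functorial projection~$\pi$.

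Finally I would show $\Phi$ is a bijection. Injectivity follows immediately from surjectivity of $\pi$: if $\varphi\circ\pi=0$, then $\varphi=0$. For surjectivity, take $\psi\in(M^\ast)^G$; the condition $\psi(g^{-1}m)=\psi(m)$ for all $g,m$ means precisely that $\psi$ vanishes on every generator $gm-m$ of $I=\ker(\pi)$, so $\psi$ factors uniquely through $\pi$ as $\psi=\varphi\circ\pi$ for some $\varphi\in(M_G)^\ast$, and by construction $\Phi(\varphi)=\psi$.

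I do not expect a genuine obstacle: the statement is nothing more than the universal property of the coinvariants, i.e.\ the identification $\operatorname{Hom}_k(M_G,k)=\operatorname{Hom}_{k[G]}(M,k)=(M^\ast)^G$ with $k$ carrying the trivial $G$-action. The only point that requires a line of care is the bookkeeping between the contragredient $G$-action on $M^\ast$ and the condition $\psi(gm)=\psi(m)$ needed to descend $\psi$ to $M_G$; the substitution $g\leftrightarrow g^{-1}$ (harmless, as $g$ ranges over all of $G$) reconciles the two.
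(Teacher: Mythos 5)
Your proof is correct, and the map you construct, $\varphi\mapsto\varphi\circ\pi$, is exactly the canonical map $\alpha_M$ of the paper (the dual of the projection $M\to M_G$). Where you differ is in how bijectivity is established. The paper chooses a presentation $k[G]^s\to k[G]^r\to M\to 0$, dualises and takes invariants/coinvariants to get a commutative ladder with exact rows (using that $(-)_G$ is right exact, $(-)^\ast$ is exact over a field, and $(-)^G$ is left exact), and then reduces by a diagram chase to the single case $M=k[G]$, which is checked by hand. You instead verify the isomorphism directly: injectivity from surjectivity of $\pi$, and surjectivity from the observation that a $G$-invariant functional kills every generator $gm-m$ of $\ker\pi$ and hence descends. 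Your route is more elementary and arguably cleaner --- it is literally the adjunction $\operatorname{Hom}_k(M_G,k)\cong\operatorname{Hom}_{k[G]}(M,k)\cong (M^\ast)^G$ --- and it does not use the finite generation of $M$ at all, whereas the paper's reduction to free modules implicitly relies on finite rank (duals do not commute with infinite direct sums). What the paper's approach buys is a template that generalises to situations where the stalk-level or generator-level computation is less transparent, but for this statement your direct argument is perfectly adequate and complete. Your parenthetical care about the substitution $g\leftrightarrow g^{-1}$ is exactly the right bookkeeping point.
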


\begin{proof}
The dual of the canonical projection~$M \rightarrow M_G$ induces a natural map $\alpha_M: (M_G)^\ast \rightarrow (M^\ast)^G$. Given a representation
\[ k[G]^s \rightarrow k[G]^r \rightarrow M \rightarrow 0\]
of~$M$, we obtain the following commutative diagram with exact rows:
\[\xymatrix{0 \ar[r] & (M_G)^* \ar[r] \ar[d]^{\alpha_M} & \left(\left(k[G]^r\right)_G\right)^* \ar[r]\ar[d]^{\alpha_{k[G]^r}} & \left(\left(k[G]^s\right)_G\right)^* \ar[d]^{\alpha_{k[G]^s}}\\
0 \ar[r] & (M^*)^G \ar[r] & \left(\left(k[G]^r\right)^*\right)^G \ar[r] & \left(\left(k[G]^s\right)^*\right)^G.
}\]
It therefore suffices to proof Lemma~\ref{dual} for $M=k[G]$ in which case it is easy to check.
\end{proof}

\begin{proof}[Proof of Theorem~\ref{deformation}]
A simple spectral-sequence argument (see Proposition~3.1 in \cite{Konto07}) shows that
\[H^1(G, {\cal T}_X) \cong H^1(X, {\cal T}_X)^G.\]
We therefore obtain:
\begin{eqnarray*}
\lefteqn{\dim_k H^1(G, {\cal T}_X) = \dim_k H^1(X, {\cal T}_X)^G}\\
& = & \dim_k (H^0(X, \Omega_X^{\otimes 2})^*)^G \qquad \textrm{(by Serre duality, see \cite[III, 7.12.1]{hart})} \\
&=& \dim_k (H^0(X, \Omega_X^{\otimes 2})_G)^* \qquad \textrm{(by Lemma~\ref{dual})}\\
&=& \dim_k H^0(X, \Omega_X^{\otimes 2})_G\\
&=& \dim_k H^0(X, \Omega_X^{\otimes 2})^G \qquad \qquad \textrm{(by assumption)}\\
&=& 3(g_Y -1) + \textrm{deg} \left\lfloor \frac{2 \pi_*(R)}{n} \right\rfloor \qquad \textrm{(by Corollary~\ref{dim})}\\
&=& 3 g_Y -3 + \sum_{Q \in Y} \left\lfloor \frac{2\delta_Q}{e_Q} \right\rfloor,
\end{eqnarray*}
as was to be shown.
\end{proof}

\section{When does an Automorphism of a Riemann Surface Act Trivially on its First Homology?}\label{homology}

Let $X$ be a connected compact Riemann surface of genus $g \ge 2$, let $m\ge 2$ and let $\sigma$ be an automorphism of $X$ of order $n \not=1$. Rather than the action of $\sigma$ on $H^0(X,\Omega_X^{\otimes m})$, we now study the action of $\sigma$ on the first homology group $H_1(X, \ZZ/m\ZZ)$ of $X$ with values in $\ZZ/m\ZZ$. The object of this section is to point out a striking analogy between these two actions being trivial.

We recall that Corollary~\ref{trivialPoly} states that (in fact for any connected smooth projective curve~$X$ of genus at least 2 over any algebraically closed field) the automorphism~$\sigma$ acts trivially on $H^0(X,\Omega_X^{\otimes m})$ if and only if $m=g_X=2$ and $\sigma$ is a hyperelliptic involution. The following theorem addresses the analogue of the `only-if' direction of this statement.

\begin{thm}\label{trivialhomology}
If $\sigma$ acts trivially on $H_1(X,\ZZ/m\ZZ)$, then $m=2$ and $\sigma$ is an involution.
\end{thm}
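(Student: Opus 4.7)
The plan is to reduce to the case where $m = p$ is prime, then to show $n = 2$ and $p = 2$, and finally to deduce $m = 2$. The basic tool, used repeatedly, is the Lefschetz fixed point formula: for any non-trivial holomorphic automorphism $\tau$ of $X$, the number of fixed points is $f(\tau) = 2 - \textrm{tr}(\tau_\ast \mid L)$, where $L := H_1(X,\ZZ) \cong \ZZ^{2g}$. So the hypothesis $\tau_\ast = I$ on $L$ would give $f(\tau) = 2 - 2g < 0$, which is absurd; hence no non-trivial power of $\sigma$ can act as the identity on the integral lattice~$L$.

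First I would pass to a prime divisor $p$ of $m$, since triviality on $H_1(X,\ZZ/m\ZZ)$ implies triviality on $H_1(X,\ZZ/p\ZZ)$. Writing $\sigma_\ast = I + pA$ with $A \in \textrm{End}(L)$, the characteristic polynomial $\chi(x) := \det(xI - \sigma_\ast)$ factorizes over $\ZZ$ as $\prod_{d \mid n} \Phi_d(x)^{a_d}$ (because $\sigma_\ast^n = I$ makes $\sigma_\ast$ semisimple) and must satisfy $\chi(x) \equiv (x-1)^{2g} \pmod p$. Using the elementary fact that $\Phi_d(1) \equiv 0 \pmod p$ only when $d = 1$ or $d$ is a power of $p$, this forces $a_d = 0$ for all other~$d$.

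The key step strengthens this to $a_{p^k} = 0$ whenever $k \ge 1$ and $p^k \neq 2$. Let $e_d : H_1(X,\mathbb{Q}) \to V_d$ denote the $\mathbb{Q}$-linear projection onto the $\Phi_d$-isotypic component~$V_d$. Applying $e_d$ to $(\sigma_\ast - I) L \subseteq pL$ yields $(\zeta_{p^k} - 1)\,e_d(L) \subseteq p\, e_d(L)$ as an inclusion of $\ZZ[\zeta_{p^k}]$-modules. Localizing at the unique prime $\mathfrak{p} = (\zeta_{p^k} - 1)$ of $\ZZ[\zeta_{p^k}]$ above $p$, the module $e_d(L)_{\mathfrak{p}}$ is a non-zero finitely generated free module over the DVR $\mathcal{O}_{\mathfrak{p}}$, and the inclusion forces the valuation bound $1 = v_{\mathfrak{p}}(\zeta_{p^k} - 1) \ge v_{\mathfrak{p}}(p) = \phi(p^k)$, hence $p^k = 2$. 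This local integral-lattice argument is the main obstacle; passing to the prime $\mathfrak{p}$ sidesteps the difficulty that the global module $e_d(L)$ need not be free over $\ZZ[\zeta_{p^k}]$.

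Combining the two cases: if $p$ is odd then $\chi(x) = (x-1)^{2g}$, giving $\sigma_\ast = I$ and contradicting the Lefschetz observation; if $p = 2$ then $\chi(x) = (x-1)^a (x+1)^b$, so $\sigma_\ast^2 = I$ on $L$, and applying Lefschetz to~$\sigma^2$ forces $\sigma^2 = 1$, i.e.\ $n = 2$. Finally, to deduce $m = 2$ itself, I would use that $\sigma_\ast \neq I$ on $L$ to produce a primitive vector $v \in L$ with $\sigma_\ast v = -v$ (take any nonzero element of $L \cap \ker(\sigma_\ast + I)$ and divide out by its content); then $(\sigma_\ast - I)v = -2v \in mL$ together with the primitivity of~$v$ forces $m \mid 2$, hence $m = 2$.
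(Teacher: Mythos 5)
Your proof is correct. The paper itself does not argue this theorem directly: it simply cites the theorem at the end of Section V.3.4 of Farkas--Kra, remarking that the proof there rests on Serre's result that the principal congruence subgroup of level $m\ge 3$ in $\mathrm{Sp}_{2g}(\ZZ)$ is torsion-free, combined with the classical fact that $\Aut(X)$ acts faithfully on $H_1(X,\ZZ)$ for $g\ge 2$. What you have written is, in effect, a self-contained proof of exactly these two ingredients: the Lefschetz fixed-point count (with the observation that holomorphic fixed points have index $+1$, so $2-\mathrm{tr}(\tau_*)\ge 0$) gives faithfulness on $H_1(X,\ZZ)$, and your cyclotomic/valuation argument --- semisimplicity of $\sigma_*$, the computation of $\Phi_d(1)$ mod $p$, and the localization of $e_d(L)$ at the prime above $p$ in $\ZZ[\zeta_{p^k}]$ to compare $v_{\mathfrak p}(\zeta_{p^k}-1)=1$ with $v_{\mathfrak p}(p)=\phi(p^k)$ --- is a clean proof of the relevant instance of Serre's (Minkowski's) lemma, with the final content argument on a primitive $(-1)$-eigenvector pinning down $m=2$ rather than merely $m$ a power of $2$. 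All the steps check: the projection $e_d$ commutes with $\sigma_*$ so the inclusion $(\zeta_{p^k}-1)e_d(L)\subseteq p\,e_d(L)$ is legitimate, $e_d(L)_{\mathfrak p}$ is free and nonzero over the DVR whenever $a_d>0$, and the semisimplicity of $\sigma_*$ lets you pass from the characteristic polynomial to $\sigma_*=I$ (for $p$ odd) or $\sigma_*^2=I$ (for $p=2$). The trade-off is the obvious one: the paper's proof is two lines but opaque, depending on an external theorem, whereas yours is longer but elementary and makes visible why $m=2$ is the unique exception.
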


\begin{proof}
This follows from the theorem at the end of Section~V.3.4 in \cite{FarkasKra}. We remark that the proof of that theorem is based on a well-known fact (deduced by Serre) about torsion in principal congruence subgroups.
\end{proof}


The next theorem is about the analogue of the `if' direction of Corollary~\ref{trivialPoly}.

\begin{thm}\label{trivialhomology2}
Let $\sigma$ be an involution. Then the implications (a) $\Leftrightarrow$ (b) $\Rightarrow$~(c)~$\Leftrightarrow$~(d) hold for the following statements. 
\begin{enumerate}
\item[(a)] $g=2$ and $\sigma$ is a hyperelliptic involution.
\item[(b)] For every simple closed curve $\alpha$ on $X$, the curve $\sigma(\alpha)$ is freely homotopic to~$\alpha$ or $-\alpha$.
\item[(c)] There exists a basis $B$ of $H_1(X,\ZZ)$ such that $\sigma(x) = \pm x$ for all $x \in B$.
\item[(d)] The involution $\sigma$ acts trivially on $H_1(X,\ZZ/2\ZZ)$.
\end{enumerate}
\end{thm}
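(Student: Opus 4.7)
The plan is to handle the equivalences in four steps: the algebraic equivalence (c)$\Leftrightarrow$(d), the routine implication (b)$\Rightarrow$(c), and the two halves (a)$\Rightarrow$(b) and (b)$\Rightarrow$(a) of the topological equivalence, where the real work lies.

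For (c)$\Rightarrow$(d), reduction modulo~$2$ collapses $\pm 1$ to $1$, so any $\pm 1$-eigenbasis is fixed modulo~$2$. For (d)$\Rightarrow$(c), write $M = H_1(X,\ZZ)$ and assume $\sigma_\ast x - x \in 2M$ for every $x \in M$; then the identity
\[ x = \tfrac{1}{2}(x + \sigma_\ast x) + \tfrac{1}{2}(x - \sigma_\ast x) \]
makes sense already in~$M$ and exhibits $M$ as the direct sum of the $+1$- and $-1$-eigensublattices of $\sigma_\ast$, so any basis of~$M$ assembled from $\ZZ$-bases of these two sublattices serves as~$B$. For (b)$\Rightarrow$(c), recall that on a closed orientable surface every primitive homology class is represented by an oriented simple closed curve, so a basis $[\alpha_1],[\beta_1],\ldots,[\alpha_g],[\beta_g]$ of~$M$ can be chosen to consist of classes of simple closed curves; hypothesis~(b) then sends each such generator to $\pm$ itself.

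For (a)$\Rightarrow$(b), I would use that on a Riemann surface $X$ of genus~$2$ the hyperelliptic involution $\sigma$ is central in the mapping class group of~$X$ (Birman--Hilden). Applied to the Dehn twist $T_\gamma$ about any essential simple closed curve~$\gamma$, centrality yields $T_{\sigma(\gamma)} = \sigma T_\gamma \sigma^{-1} = T_\gamma$, forcing $\sigma(\gamma)$ to coincide with $\gamma$ as unoriented isotopy classes; comparing orientations then gives~(b). Equivalently one can argue by hand: the projection $\pi\colon X \to \PP^1_\CC$ is branched over six points~$B$, and every essential simple closed curve $\gamma$ on~$X$ either is setwise $\sigma$-invariant---in which case (b) is immediate---or equals one of the two components of $\pi^{-1}(\bar\gamma)$ for some simple closed curve $\bar\gamma \subset S^2 \setminus B$ splitting~$B$ evenly; the ``smaller'' side of $\pi^{-1}(\bar\gamma)$ is then a double cover of a disc branched over two points, hence an annulus, so $\sigma(\gamma)$ is isotopic to~$\gamma$.

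For (b)$\Rightarrow$(a), first observe that the $\mathbb{Q}$-subspaces $V_\pm = \ker(\sigma_\ast \mp 1) \subseteq H_1(X,\mathbb{Q})$ together contain every homology class of a simple closed curve and thus span $H_1(X,\mathbb{Q})$; since no $\mathbb{Q}$-vector space is a union of two proper subspaces, $V_+$ or $V_-$ must be everything, so $\sigma_\ast = \pm I$ on~$M$. The case $\sigma_\ast = +I$ is excluded by Theorem~\ref{faithful1}: in characteristic zero $\langle\sigma\rangle$ acts faithfully on $H^0(X,\Omega_X)$, and via the Hodge decomposition $H^1(X,\CC) \cong H^0(X,\Omega_X) \oplus \overline{H^0(X,\Omega_X)}$ a trivial $\sigma_\ast$ on $H_1(X,\CC)$ would force $\sigma$ to act trivially on $H^0(X,\Omega_X)$. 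Hence $\sigma_\ast = -I$. The Lefschetz fixed point formula then yields $2+2g$ fixed points of~$\sigma$, and Riemann--Hurwitz applied to $X \to X/\langle\sigma\rangle$ gives $g_Y = 0$, so $\sigma$ is a hyperelliptic involution. To exclude $g \ge 3$, I would take $\bar\alpha \subset S^2 \setminus B$ separating the $2g+2$ branch points into two subsets of equal size $g+1 \ge 4$; the preimage $\pi^{-1}(\bar\alpha)$ is then a disjoint union of two simple closed curves $\alpha$ and $\sigma(\alpha)$ which cobound a subsurface of positive genus on each side and are therefore not freely homotopic, contradicting~(b). The main obstacle is precisely this final step, separating the genus~$2$ case from higher hyperelliptic genera; a more conceptual alternative would be to invoke the theorem that the kernel of the action of the mapping class group on the curve complex is $\langle\iota_\mathrm{hyp}\rangle$ for $g=2$ and trivial for $g \ge 3$.
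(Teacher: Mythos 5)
Your handling of (b)~$\Rightarrow$~(c)~$\Leftrightarrow$~(d) coincides with the paper's: the same half-sum/half-difference decomposition for (d)~$\Rightarrow$~(c) and the same appeal to a basis consisting of classes of simple closed curves for (b)~$\Rightarrow$~(c). For the equivalence (a)~$\Leftrightarrow$~(b) the paper does essentially no work: it cites Theorems~1 and~2 of Haas--Susskind together with Lewittes' rigidity statement that homotopic biholomorphic automorphisms coincide. You instead prove both directions from scratch, which is a genuinely different and more self-contained route: the Birman--Hilden centrality argument with $T_{\sigma(\gamma)}=\sigma T_\gamma\sigma^{-1}$ for (a)~$\Rightarrow$~(b) is correct, and the skeleton of your (b)~$\Rightarrow$~(a) (force $\sigma_*=\pm I$, exclude $+I$ via faithfulness on $H^0(X,\Omega_X)$ and the Hodge decomposition, then Lefschetz and Riemann--Hurwitz to get $g_Y=0$) is sound. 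One point there deserves to be made explicit: ``no $\mathbb{Q}$-vector space is a union of two proper subspaces'' applies only once you know that \emph{every} vector of $H_1(X,\mathbb{Q})$ lies in $V_+\cup V_-$, not merely that a spanning set does; you do get this because every primitive integral class is represented by a simple closed curve, so every rational line meets the set of such classes.

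The one step that fails as written is the exclusion of $g\ge 3$ at the end of (b)~$\Rightarrow$~(a). A simple closed curve $\bar\alpha\subset S^2\setminus B$ has disconnected preimage under the hyperelliptic double cover if and only if it encloses an \emph{even} number of branch points. Your curve separates the $2g+2$ branch points into two sets of size $g+1$, which is odd whenever $g$ is even, so for $g=4,6,\ldots$ the preimage $\pi^{-1}(\bar\alpha)$ is a single $\sigma$-invariant curve and yields no contradiction. The fix is to split the branch points into sets of sizes $2k$ and $2g+2-2k$ with $2\le k\le g-1$ (possible exactly when $g\ge 3$, e.g.\ $k=2$); then $\pi^{-1}(\bar\alpha)=\alpha\sqcup\sigma(\alpha)$, and the two subsurfaces they cobound are double covers of discs branched over $2k$ and $2g+2-2k$ points, of Euler characteristics $2-2k<0$ and $2k-2g<0$ respectively, so neither is an annulus and $\sigma(\alpha)$ is not freely homotopic to $\pm\alpha$. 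With that correction your argument is complete and gives an independent proof of the Haas--Susskind input that the paper takes as a black box.
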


\begin{proof}
The equivalence (a) $\Leftrightarrow$ (b) follows from Theorem~1 and Theorem~2 in the paper \cite{HS} by Haas and Susskind and from the fact that any two biholomorphic automorphisms of~$X$ that are homotopic to each other are in fact equal, see \cite[Corollary~2]{Lew}.\\
The implication (b) $\Rightarrow$ (c) follows from the well-known fact that there exists a basis~$B$ of $H_1(X, \ZZ)$ consisting of classes of simple closed curves. It also follows from Theorem~\ref{minusone} below. \\
The implication (c) $\Rightarrow$ (d) is trivial because $H_1(X, \ZZ/2\ZZ) \cong H_1(X,\ZZ) \otimes \ZZ/2\ZZ$. To prove the converse (d) $\Rightarrow$ (c), we observe that for any $x \in H_1(X, \ZZ)$, the classes of $x +\sigma(x)$ and $x-\sigma(x)$ in $H_1(X, \ZZ/2\ZZ)$ are zero; hence $x_+:= \frac{x+\sigma(x)}{2}$ and $x_- := \frac{x-\sigma(x)}{2}$ are well-defined elements in $H_1(X, \ZZ)$ such that $\sigma(x_\pm) = \pm x_\pm$ and $x = x_+ + x_-$. The union of bases for $E_\pm(\sigma):= \{ x\in H_1(X, \ZZ): \sigma(x) = \pm x\}$ is therefore a basis~$B$ of $H_1(X, \ZZ)$ with the required property.
\end{proof}

The following final theorem shows that after dropping the assumption $g=2$ in statement~(a) of the previous theorem, the implication (a) $\Rightarrow$ (c) still holds. In contrast to Corollary~\ref{trivialPoly}, the implication (d) $\Rightarrow$ (a) is therefore not true.

\begin{thm}\label{minusone}
If $\sigma$ is a hyperelliptic involution, then $\sigma$ acts by multiplication with~$-1$ on $H_1(X,\ZZ)$.
\end{thm}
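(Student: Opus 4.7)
The plan is to show that $\sigma_\ast$ acts as $-\mathrm{id}$ on $H_1(X,\mathbb{Q})$ and then deduce the integral statement from torsion-freeness of $H_1(X,\ZZ)$. Since $\sigma_\ast$ is an involution of a characteristic-zero vector space it is diagonalisable with eigenvalues in $\{\pm 1\}$, so the rational statement reduces to showing that the $+1$-eigenspace $H_1(X,\mathbb{Q})^\sigma$ vanishes.

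A clean way is via Hodge theory: the Hodge decomposition
\[
H^1(X,\CC)\;\cong\;H^0(X,\Omega_X)\oplus\overline{H^0(X,\Omega_X)}
\]
is $\sigma$-equivariant because $\sigma$ is holomorphic and complex conjugation commutes with the induced action. In characteristic zero the projection $\pi\colon X\to X/\langle\sigma\rangle\cong\PP^1_\CC$ is tamely ramified and its target has genus $0$, so Corollary~\ref{dim} with $m=1$ yields $\dim_\CC H^0(X,\Omega_X)^\sigma=g_Y=0$; taking complex conjugates gives the vanishing on the $H^{0,1}$-summand as well, whence $H^1(X,\CC)^\sigma=0$, and Poincar\'e duality forces $H_1(X,\mathbb{Q})^\sigma=0$, as required.

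A more topological route, independent of Corollary~\ref{dim}, is the Lefschetz fixed-point formula. By Hurwitz applied to $\pi$, the involution $\sigma$ has exactly $2g+2$ fixed points, each of local Lefschetz index $+1$ (at a fixed point $p$, $d\sigma_p=-1$ on the complex tangent line, so $\det_{\mathbb{R}}(\mathrm{id}-d\sigma_p)=4>0$); and $\sigma_\ast$ acts as the identity on both $H_0(X,\mathbb{Q})$ and $H_2(X,\mathbb{Q})$ because $X$ is connected and $\sigma$ is orientation-preserving. Consequently
\[
\operatorname{tr}\bigl(\sigma_\ast\mid H_1(X,\mathbb{Q})\bigr)\;=\;1+1-(2g+2)\;=\;-2g,
\]
which together with diagonalisability and $\dim_\mathbb{Q} H_1(X,\mathbb{Q})=2g$ forces both eigenvalues of $\sigma_\ast$ to equal $-1$.

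Having $\sigma_\ast=-\mathrm{id}$ on $H_1(X,\mathbb{Q})$, the integral statement is immediate: $H_1(X,\ZZ)\cong\ZZ^{2g}$ is torsion-free and embeds into $H_1(X,\mathbb{Q})$, so the identity $\sigma_\ast(x)=-x$ descends to $\ZZ$-coefficients. There is no serious obstacle; the only points that need careful verification are the $\sigma$-equivariance of the Hodge decomposition in the first route, or the isolatedness and sign of the Lefschetz indices at the fixed points in the second. I would favour the Hodge-theoretic version, since it directly reuses Corollary~\ref{dim} and thus ties the theorem into the theme of the paper.
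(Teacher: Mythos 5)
Your proposal is correct, and both of your routes are genuinely different from the paper's argument. The paper proves Theorem~\ref{minusone} by appealing to the standard topological model of the hyperelliptic involution as a rotation by $180^\circ$ about an axis through the $2g+2$ fixed points, and then checks $\sigma(\alpha_i)=-\alpha_i$ and $\sigma(\beta_i)=-\beta_i$ directly on the standard basis, using free homotopies for $\beta_1,\beta_g$ and the vanishing of the class of the oriented boundary $\beta_i+\sigma(\beta_i)$ of a subsurface for the middle $\beta_i$. That argument is visually transparent but leans on the (true, yet not entirely trivial) fact that every hyperelliptic involution is topologically conjugate to that model. Your Lefschetz computation avoids this entirely: the count of $2g+2$ nondegenerate fixed points of index $+1$, together with $\operatorname{tr}(\sigma_*\mid H_0)=\operatorname{tr}(\sigma_*\mid H_2)=1$, pins down $\operatorname{tr}(\sigma_*\mid H_1(X,\mathbb{Q}))=-2g$, and diagonalisability with eigenvalues $\pm1$ then forces $\sigma_*=-\mathrm{id}$; the descent to $\ZZ$-coefficients via torsion-freeness is exactly right. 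The Hodge-theoretic route is also sound --- $\dim H^0(X,\Omega_X)^\sigma=g_Y=0$ by Corollary~\ref{dim} in the tame case, conjugation-equivariance handles the $(0,1)$-part, and equivariant duality transfers the statement to $H_1$ --- and has the merit of reusing the paper's dimension formula; its only mild overhead is that you should note the identification of $H^1$ with the dual of $H_1$ is $\sigma$-equivariant (e.g.\ because $\sigma$ preserves the orientation class). Either of your arguments buys rigour and brevity at the cost of the geometric intuition the paper's picture provides, and the Lefschetz version in particular makes it evident why the answer is forced for \emph{every} genus, which is the point of the theorem in contrast to Theorem~\ref{trivialhomology2}.
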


\begin{proof}
Topologically, the hyperelliptic involution~$\sigma$ `rotates $X$ by $180^\circ$ around an axis~$L$' as depicted in Figure~\ref{RiemannSurface}.
\begin{figure}
   \centering
 \includegraphics[width=5in,height=2.8in]{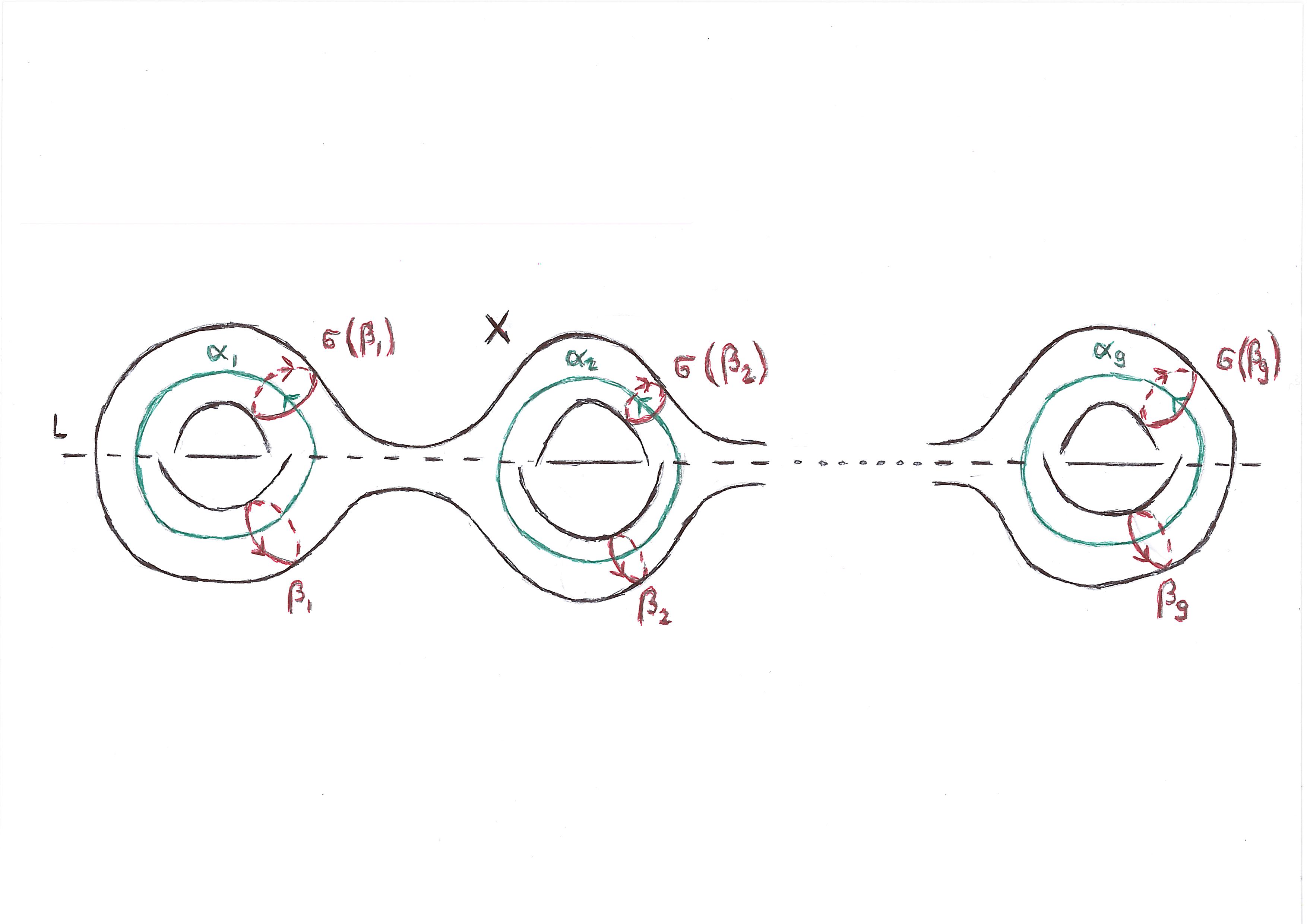}
  \caption{}
  \label{RiemannSurface}
\end{figure}
Let $\alpha_1, \ldots, \alpha_g, \beta_1, \ldots, \beta_g$ be the standard basis elements of $H_1(X, \ZZ)$ as given in Figure~\ref{RiemannSurface}. Then we obviously have $\sigma(\alpha_i) = - \alpha_i$ in $H_1(X,\ZZ)$ for all $i=1, \ldots, g$. Furthermore $\sigma(\beta_1)$ and $\beta_1$ and also $\sigma(\beta_g)$ and $\beta_g$ are homotopic to each other (but with different orientation); hence we have $\sigma(\beta_1)=-\beta_1$ and $\sigma(\beta_g)=-\beta_g$ in $H_1(X,\ZZ)$. To see that $\sigma(\beta_i) = - \beta_i$ also for $i=2, \ldots, g-1$, let $X_i$ be the `left-hand (or right-hand) part of the surface~$X$ bounded by $\beta_i \cup \sigma(\beta_i)$'. Being the oriented boundary of the oriented surface $X_i$ the class $\beta_i + \sigma(\beta_i)$ vanishes in the homology~$H_1(X_i,\ZZ)$ of the subspace~$X_i$ of~$X$ and hence also in $H_1(X,\ZZ)$, as was to be shown.
\end{proof}

We end with the following problem.

{\bf Problem.} Give a geometric characterization of those involutions $\sigma \in \textrm{Aut}(X)$ for which condition~(c) of Theorem~\ref{trivialhomology2} holds.

{\em Acknowledgements.} The authors would like to thank Niels Borne, Allen Broughton, Frank Herrlich, Gareth Jones, Aristides Kontogeorgis, Ian Leary, Michel Matignon and David Singerman for raising various questions underlying this paper and/or for explaining various concepts and ideas concerning particularly the final section. Furthermore the authors would like to thank the referees for carefully reading the paper, for suggesting numerous helpful improvements and for drawing our attention to related work in the literature.

\bibliography{biblio}

\end{document}